\numberwithin{equation}{section}
\newtheorem{thm}{Theorem}[section]
\newtheorem{cor}[thm]{Corollary}
\newtheorem{lem}[thm]{Lemma}
\newtheorem{defn}[thm]{Definition}
\newtheorem{exam}[thm]{Example}
\newtheorem{rem}[thm]{Remark}
\DeclareMathOperator{\Ext}{Ext} \DeclareMathOperator{\Supp}{Supp}
\DeclareMathOperator{\V}{V} \DeclareMathOperator{\Hom}{Hom}
\DeclareMathOperator{\Ker}{Ker} \DeclareMathOperator{\Coker}{Coker}
\DeclareMathOperator{\Image}{Im}
 \DeclareMathOperator{\Max}{Max}
\DeclareMathOperator{\lc}{H} 
\DeclareMathOperator{\G}{\Gamma} 
\DeclareMathOperator{\E}{E}
\DeclareMathOperator{\Ass}{Ass}
\DeclareMathOperator{\dimSupp}{dimSupp}
\newcommand{\oops}[1]{%
	\textbf{#1}}
\newcommand{\fa}{\mathfrak{a}}
\newcommand{\fm}{\mathfrak{m}}
\newcommand{\fp}{\mathfrak{p}}
\newcommand{\lo}{\longrightarrow}
\begin{document}

\title[Weakly cofiniteness of local
cohomology modules]
{Weakly cofiniteness of  local
	cohomology modules}

\bibliographystyle{amsplain}

   \author[M. Aghapournahr]{Moharram Aghapournahr}
\address{Department of Mathematics, Faculty of Science, Arak University,
Arak, 38156-8-8349, Iran.}
\email{m-aghapour@araku.ac.ir}



\keywords{Local cohomology, ${\rm FD_{\leq n}}$ modules, weakly  cofinite modules,  $ETH$-weakly cofinite modules}

\subjclass[2010]{13D45, 13E05, 14B15.}


\begin{abstract}
Let $R$ be a commutative Noetherian ring, $\Phi$ a system of ideals of $R$
and $I\in \Phi$.
Let $M$ be an $R$-module (not necessary $I$-torsion) such that  $\dim M\leq 1$, then the $R$-module $\Ext^i_{R}(R/I,
M)$ is weakly Laskerian, for all $i\geq 0$, if and only if the $R$-module
$\Ext^i_{R}(R/I, M)$ is weakly Laskerian, for $i=0, 1$. Let $t\in\Bbb{N}_0$ be an integer and $M$ an $R$-module such that $\Ext^i_R(R/I,M)$ is weakly Laskerian for all $i\leq t+1$. We prove that if the $R$-module 
$\lc^{i}_\Phi(M)$ is
${\rm FD_{\leq 1}}$ for all $i<t$, then $\lc^{i}_\Phi(M)$ is
$\Phi$-weakly cofinite for all $i<t$ and for any ${\rm FD_{\leq 0}}$ (or minimax)
submodule $N$ of $\lc^t_\Phi(M)$, the $R$-modules
$\Hom_R(R/I,\lc^t_\Phi(M)/N)$  and $\Ext^1_R(R/I,\lc^t_\Phi(M)/N)$ are weakly Laskerian. Let $N$ be a finitely generated $R$-module. We also prove that $\Ext^j_R(N,\lc^{i}_\Phi(M))$ and ${\rm Tor}^R_{j}(N,H^{i}_\Phi(M))$ are $\Phi$-weakly cofinite for all $i$ and $j$ whenever $M$ is weakly Laskerian and $\lc^{i}_\Phi(M)$ is ${\rm
	FD_{\leq 1}}$ for all $i$. Similar results are true for ordinary local cohomology modules and local cohomology modules defined by a pair of ideals. 
\end{abstract}

\maketitle


\section{Introduction}


Throughout  this paper $R$ is a commutative Noetherian ring with non-zero
identity and $I$ an ideal of $R$. For an $R$-module $M$,  the $i^{th}$
local cohomology module $M$ with respect to ideal $I$ is defined as
\begin{center}
	$\lc^{i}_{I}(M) \cong \underset{n}\varinjlim \Ext^{i}_{R}(R/{I}^{n},M).$
\end{center}

Grothendieck in \cite{Gro} posed the following
conjecture:

\noindent {\bf Conjecture 1.1.}\label{1.1} {\it Let $M$ be a finitely generated $R$--module and
	$I$ an ideal of $R$. Then $\Hom_R(R/I , \lc^{i}_{I}(M))$
	is finite for all $i\geq
	0$}.

This conjecture is not true in general as Hartshorne  showed in \cite{Har}, but some  authors proved that for
some number $t$, the module
$\Hom_{R}(R/I,H^t_I(M))$ is finite under some
conditions. See \cite[Theorem 3.3]{AKS}, \cite[Theorem
6.3.9]{DY2}, \cite[Theorem 2.1]{DY}, \cite[Theorem 2.6]{BN}, \cite[Theorem
2.3]{BN2} and \cite[Theorem 3.4]{AB}. Hartshorne   also  defined a module $M$
to be \emph{$I$--cofinite} if $\Supp_R(M)\subseteq \V(I)$ and
$\Ext^{i}_{R}(R/I,M)$ is finitely generated for all $i\geq0$ and posed the following
question:

\noindent{\bf Question 1.2.}\label{1.2} \emph{ Let $M$ be a finite $R$--module
	and $I$ be an ideal of $R$. When are $ \lc^{i}_{I}(M)$ $I$--cofinite for all $i\geq
	0$?}

This question was studied  by several authors in  \cite{Har, HK, DM, MV, Mel2, Mel, BN} and  \cite{AB}.

There are some generalizations of the theory of ordinary local cohomology modules.
The following is introduced by Bijan-Zadeh in \cite{BZ1}.

Let $\Phi$ be a non-empty set of ideals of $R$. We call $\Phi$ a {\it system of
	ideals} of $R$ if, whenever $I_1,I_2\in\Phi $, then there is an ideal $J\in\Phi$
such that $J\subseteq I_1{I_2}$. For such a system, for every $R$-module $M$,
one can define
$$\G_{\Phi}(M)=\{~x\in M\mid Ix=0 \textmd{\ \ for some}~ I\in \Phi\}.$$

Then $\G_{\Phi}(-)$ is a functor from $\mathscr{C}(R)$ to itself (where
$\mathscr{C}(R)$ denotes the category of all $R$-modules and all
$R$-homomorphisms). The functor  $\G_{\Phi}(-)$ is additive, covariant,
$R$-linear and left exact. In \cite{BZ2}, $\G_{\Phi}(-)$ is denoted by $L_{\Phi}(-)$
and is called the \textquotedblleft general local cohomology functor with respect to $\Phi$\textquotedblright. For
each $i\geq 0$, the $i$-th right derived functor of $\G_{\Phi}(-)$ is denoted by
$\lc_{\Phi}^i(-)$. The functor $\lc_{\Phi}^i(-)$ and $\underset{I\in
	\Phi}\varinjlim\lc_{I}^i(-)$ (from $\mathscr{C}(R)$ to itself) are naturally equivalent
(see \cite{BZ1}). For an ideal $I$ of $R$, if $\Phi=\{I^n| n\in \mathbb{N}_0\}$, then
the functor $\lc_{\Phi}^i(-)$ coincides with the ordinary local cohomology functor
$\lc_{I}^i(-)$.
\oops{It is shown that, the study of torsion theory over $R$ is equivalent to study the
general local cohomology theory (see \cite{BZ2}).}

As a special case of  \cite[Definition 2.1]{Y} and generalization of FSF
modules (see \cite[Definition 2.1]{hung}), in \cite[Definition 2.1]{AB} the author of present paper and Bahmanpour
introduced the class of {\it{${\rm FD_{\leq n}}$}} modules. A module
$M$ is said to be ${\rm FD_{\leq n}}$ module, if there exists a finitely generated
submodule  $N$ of $M$ such that $\dim M/N\leq n$. For more details about
properties of this class see \cite[Lemma 2.3]{AB}. Note that the class of ${\rm
	FD_{\leq -1}}$ is the same as finitely generated $R$-modules.
Recall that a module $M$ is a \emph{minimax} module if there is a
finitely generated submodule $N$ of $M$ such that the quotient module $M/N$
is Artinian.  Minimax modules have been studied by
Z\"{o}schinger in
\cite{Zrmm}.   
Recall too that an
$R$-module $M$ is called \emph{weakly Laskerian} if $\Ass_R(M/N)$ is a
finite set for each  submodule $N$ of $M$. The class of weakly Laskerian
modules was introduced in \cite{DiM} by Divaani-Aazar and Mafi. They also as a generalization of cofinite modules with respect to an ideal in \cite{DiM2}  defined an $R$-module $M$ to be  \emph{weakly cofinite with respect to ideal $I$ of $R$} or $I$-weakly cofinite if $\Supp_R(M)\subseteq \V(I)$ and
$\Ext^{i}_{R}(R/I,M)$ is weakly Laskerian for all $i\geq0$. In \cite[Definition 4.2]{AB1} the author of present paper and Bahmanpour introduced  the concept of
{\it$\Phi$-weakly cofiniteness} of general local cohomology  modules. The general local 
cohomology module $\lc_{\Phi}^j(M)$ is defined to be $\Phi$-weakly cofinite if there exists
an ideal $I \in \Phi$ such that $\Ext^i_R(R/I,\lc_{\Phi}^j(M))$ is  weakly Laskerian, for all
$i,j\geq0$.

Recently many authors studied the weakly Laskerianness and weakly cofiniteness of local cohomology modules and answered the Conjecture 1.1 and  Question 1.2 in the class of weakly Laskerian modules in some cases (see  \cite{DiM, DiM2, AM, Z, NT, VA, AB,  BNS2}). The purpose of this note is to make a suitable
generalization of Conjecture 1.1 and Question 1.2 in terms of weakly Laskerian modules instead of finitely generated modules for general local cohomology modules. In this direction in Section
2, we generalize \cite[Theorem 3.4 and Corollaries 3.5 and 3.6]{AB} and \cite[Theorem 2.9 and Corollaries 2.10]{AB1}. More precisely, we shall show that:\\

\noindent{\bf Theorem 1.3.} (See Theorem \ref{homext}) {\it Let $R$ be a Noetherian ring and $I\in \Phi$ an ideal of $R$. Let $t\in\Bbb{N}_0$ be an integer and $M$ an $R$-module such that $\Ext^i_R(R/I,M)$ are weakly Laskerian for all $i\leq t+1$. Let the $R$-modules $\lc^{i}_\Phi(M)$ are ${\rm FD_{\leq 1}}$ $R$-modules for all $i<t$. Then, the following conditions hold:
	\begin{itemize}
		\item[(i)]  The $R$-modules $\lc^{i}_\Phi(M)$ are $I$-$ETH$-weakly cofinite {\rm{(}}in particular $\Phi$-weakly cofinite{\rm{)}} for all $i<t$.
		\item[(ii)]  For all ${\rm FD_{\leq 0}}$ {\rm{(}}or minimax{\rm{)}} submodule $N$ of
		$\lc^{t}_\Phi(M)$, the $R$-modules $${\rm Hom}_R(R/I,\lc^{t}_\Phi(M)/N)\,\,\,
		{\rm and}\,\,\,{\rm Ext}^1_R(R/I,\lc^{t}_\Phi(M)/N)$$ are weakly Laskerian.
	\end{itemize}}

	\noindent{\bf Corrolary 1.4.} (See Corollary \ref{cof}) {\it  Let $R$ be a Noetherian ring and $I\in \Phi$ an ideal of $R$. Let $M$
		be an $I$-$ETH$-weakly cofinite $R$-module  such that the $R$-modules $\lc^{i}_\Phi(M)$
		are ${\rm FD_{\leq 1}}$ $R$-modules for all
		$i$. Then,
		\begin{itemize}
			\item[(i)] the $R$-modules $\lc^{i}_\Phi(M)$ are $I$-$ETH$-weakly cofinite {\rm{(}}in particular, $\Phi$-weakly cofinite{\rm{)}} for all $i$.
			\item[(ii)]  for any $i\geq 0$ and for any ${\rm FD_{\leq 0}}$ {\rm{(}}or minimax{\rm{)}} submodule $N$ of
			$\lc^{i}_\Phi(M)$, the $R$-module $\lc^{i}_\Phi(M)/N$ is   $I$-$ETH$-weakly cofinite
			{\rm{(}}in particular, $\Phi$-weakly cofinite{\rm{)}}.
		\end{itemize}}

		Hartshorne also asked the following question:
		
		\noindent{\bf Question 1.5.} \emph{Whether the category $\mathscr{M}(R,
			I)_{cof}$ of $I$-cofinite modules forms an Abelian subcategory of the category of
			all $R$-modules? That is, if $f: M\longrightarrow N$ is an $R$-module
			homomorphism of $I$-cofinite modules, are $\Ker f$ and $\Coker f$ $I$-cofinite?}
		
		With respect to this question, Hartshorne showed that if $I$ is a prime ideal of
		dimension one in a complete regular local ring $R$, then the answer to his
		question is positive. On the other hand, in \cite{DM}, Delfino and Marley extended
		this result to arbitrary complete local rings. Recently, Kawasaki \cite{Ka2}
		generalized the Delfino and Marley's result for an arbitrary ideal $I$ of dimension
		one in a local ring $R$. Finally,  Melkersson  in \cite{Mel1} completely have
		removed local assumption on $R$. More recently, in \cite{BNS} (resp. \cite{AB})
		it is shown that Hartshorne's question is true for  the category of all $I$-cofinite
		$R$-modules $M$ with $\dim M\leq 1$ (resp. the class of $I$- cofinite~${\rm
			FD_{\leq1}}$ modules),  for all ideals $I$ in a commutative Noetherian ring $R$. Also in \cite{BNS2} 
		it is proved that the same question is true for  the category of all $I$-weakly cofinite
		$R$-modules $M$ with $\dim M\leq 1$ for all ideals $I$ in  $R$. In this direction we introduced the concept of {\it $I$-$ETH$-weakly cofinite} or $ETH$-weakly cofinte modules with respect to $I$ in Definition \ref{def}. One of the main results of this
		section is to prove that the class of $I$-$ETH$-weakly cofinite and ${\rm FD_{\leq
				1}}$($\mathscr {FD}^1(R,I)_{ethwcof}$) modules are  Abelian category (see
		Theorem \ref{abel1}). Using this fact we prove the following corollary:\\
		
		\noindent{\bf Corrolary 1.6.} (See Corollary \ref{torextlc}) {\it Let $\Phi$ be a system of ideals of  a Noetherian ring $R$, $M$ a non-zero
			$I$-$ETH$-weakly cofinite $R$-module such that $ \lc^{i}_{\Phi}(M)$ are ${\rm FD_{\leq 1}}$~ $R$-modules for all $ i \geq 0$. Then for each finite
			$R$-module $N$, the $R$-modules ${\rm Ext}^{j}_R(N,\lc^{i}_{\Phi}(M))$ and ${\rm
				Tor}^R_{j}(N,\lc^{i}_{\Phi}(M))$ are $\Phi$-weakly cofinite and ${\rm FD_{\leq 1}}$~ $R$-modules for all $ i \geq 0$ and $ j \geq 0$.}\\
		
		In Section 3 we prove that similar corollaries are true for local cohomology modules defined  by a pair of
		ideals because it is a special case of local cohomology with respect to a
		system of ideals.

		Throughout this paper, $R$ will always be a commutative Noetherian ring with
		non-zero identity and $I$ will be an ideal of $R$.  We denote $\{\frak p \in {\rm
			Spec}\,R:\, \frak p\supseteq I \}$ by $V(I)$. 
		For any unexplained notation and terminology we refer the reader to \cite{BSh} and \cite{BH}.




\section{$ETH$-weakly cofinite modules with respect to an ideal}


The definitions of $ETH$-cofinite module and weakly cofinite module with respect to an ideal (\cite[Definitions
2.2]{A} and \cite[Definition 2.4]{DiM2}), motivate the following definition.

\begin{defn}\label{def}
	An  $R$-module $M$ {\rm{(}}not necessary $I$-torsion{\rm{)}} is called
	$ETH$-weakly cofinite with respect to an ideal $I$ of $R$ or $I$-$ETH$-weakly cofinite if
	$\Ext^{i}_{R}(R/I,M)$ is a weakly Laskerian $R$-module for all $i$.
\end{defn}

\begin{rem} Let $I$ be an  ideal of $R$.
	\begin{itemize}\item[(i)]  All weakly Laskerian $R$-modules, $ETH$-cofinite and weakly cofinite $R$-modules with respect to ideal $I$  are $I$-$ETH$-weakly cofinite.
		\item[(ii)]  Suppose $M$ is an $I$-torsion  module,
		then $M$ is $I$-$ETH$-weakly cofinite if and only if it is $I$-weakly cofinite module.
	\end{itemize}	
\end{rem}


We claim that the class of $ETH$-weakly cofinite modules with respect to an ideal  is
strictly larger than the class of $ETH$-cofinite and weakly cofinite modules with respect to the same ideal. To do this, see the following examples.

\begin{exam} 
{\rm{(}}i{\rm{)}}	Let $(R, \fm)$ be a Noetherian local ring of dimension $d>0$. Let
	$M=R\oplus E( R/\fm)$. It is easy to see that $M$ is an $\fm$-$ETH$-weakly cofinite
	$R$-module  that is not  $\fm$-cofinite.
	
{\rm{(}}ii{\rm{)}} Let $(R, \fm)$ be a Noetherian local ring of dimension $d>0$. Let
$M=R\oplus(\underset{i\in \Bbb N}\oplus  R/\fm)$. It is easy to see that $M$ is an $\fm$-$ETH$-weakly cofinite
$R$-module  that is not  $\fm$-weakly cofinite.
\end{exam}

Hajkarimi  in \cite[Definition 2.1]{Haj} introduced the class of  weakly Artinian modules as below:

\begin{defn}\label{haj}
	An  $R$-module $M$ is said to be weakly Artinian if its injective envelope, can be written as $\E_R(M)=\oplus_{i=1}^{k}\mu^0({\fm}_i,M)\E_R(R/{\fm}_i)$ where ${\fm}_1,\dots,{\fm}_k$ are maximal ideals of $R$.
\end{defn}
By \cite[Lemma 2.3 (a) and (c)]{Haj} the class of weakly Artinian $R$-modules is a Serre subcategory of the category of $R$-modules and an $R$-module $M$ is Artinian if and only if it is weakly Artinian and $\mu^0({\fm},M)$ is finite for all $\fm\in \Ass_R(M)$. 


The following lemma represent the other equivalent condition for a module to be weakly Artinian.
\begin{lem}
	\label{haj1}
	Let $M$ be an  $R$-module. Then the following statements are equivalent:
	\begin{itemize}
		\item[(i)] $M$ is weakly Artinian.
		\item[(ii)] $\Ass_R(M)$ consists of finitely many maximal ideals.
		\item[(iii)] $\Supp_R(M)$ consists of finitely many maximal ideals.
		\item[(iv)] $\Ass_R(M)=\Supp_R(M)$ and it  consists of finitely many maximal ideals.
		\item[(v)] $M$ is weakly Laskerin and $\Ass_R(M)\subseteq \Max(R)$.
		
	\end{itemize}
\end{lem}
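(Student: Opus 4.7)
The plan is to prove (i)$\Leftrightarrow$(ii) directly from the definition of weakly Artinian and the Matlis structure theorem for injective modules over a Noetherian ring, and then show (ii)$\Leftrightarrow$(iii)$\Leftrightarrow$(iv)$\Leftrightarrow$(v) as a short cycle resting on basic facts about $\Ass$ and $\Supp$.

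For (i)$\Leftrightarrow$(ii), I would invoke the Matlis decomposition
\[
\E_R(M)=\bigoplus_{\fp\in\Ass_R(M)}\E_R(R/\fp)^{(\mu^0(\fp,M))},
\]
and simply compare it with the shape required by Definition \ref{haj}: the decomposition has finitely many summands indexed by maximal ideals precisely when $\Ass_R(M)$ is finite and contained in $\Max(R)$.

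Next, for (ii)$\Rightarrow$(iv), the key observation is that for any $\fp\in\Supp_R(M)$, localizing yields $M_\fp\neq 0$, and since $R_\fp$ is Noetherian, there exists $\fq\in\Ass_R(M)$ with $\fq\subseteq\fp$; if (ii) holds, then $\fq$ is maximal, forcing $\fp=\fq$. Hence $\Supp_R(M)=\Ass_R(M)$, which establishes (iv). The implication (iv)$\Rightarrow$(iii) is immediate, and (iii)$\Rightarrow$(ii) follows at once from the universal inclusion $\Ass_R(M)\subseteq\Supp_R(M)$.

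For (ii)$\Leftrightarrow$(v): if (ii) holds then $\Ass_R(M)\subseteq\Max(R)$ by hypothesis, while for any submodule $N\subseteq M$ the argument above gives $\Ass_R(M/N)\subseteq\Supp_R(M/N)\subseteq\Supp_R(M)=\Ass_R(M)$, which is finite, so $M$ is weakly Laskerian. Conversely, (v) gives that $\Ass_R(M)$ is finite (apply weak Laskerianness with $N=0$) and consists of maximal ideals by assumption, which is (ii).

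I expect no serious obstacle: the only point requiring mild care is the inclusion $\Supp_R(M)\subseteq\bigcup_{\fp\in\Ass_R(M)}V(\fp)$ for non–finitely generated $M$, which is precisely the localization argument sketched above and relies on Noetherianness of $R$. Everything else is bookkeeping around the defining decomposition and the elementary containment $\Ass\subseteq\Supp$.
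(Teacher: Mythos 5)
Your proof is correct; the paper itself does not give an argument here but simply cites Hajikarimi's \cite[Lemma 2.3(b)]{Haj}, so your write-up is a genuinely self-contained alternative. The two approaches differ only in that respect: where the paper outsources the verification, you supply a direct argument. Your route buys transparency and independence from the reference, at the cost of relying on two standard but nontrivial facts about modules over Noetherian rings that are worth naming explicitly: the Matlis decomposition $\E_R(M)=\bigoplus_{\fp\in\Ass_R(M)}\E_R(R/\fp)^{(\mu^0(\fp,M))}$ for \emph{arbitrary} (not necessarily finitely generated) $M$, and the localization formula $\Ass_{R_\fp}(M_\fp)=\{\fq R_\fp:\fq\in\Ass_R(M),\ \fq\subseteq\fp\}$ together with $\Ass_R(N)\neq\emptyset$ for $N\neq0$, all of which hold over a Noetherian ring without any finiteness hypothesis on the module. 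Your chain of implications is clean: (i)$\Leftrightarrow$(ii) via Matlis, then (ii)$\Rightarrow$(iv)$\Rightarrow$(iii)$\Rightarrow$(ii) using the localization argument and $\Ass\subseteq\Supp$, and finally (ii)$\Leftrightarrow$(v) by observing $\Ass_R(M/N)\subseteq\Supp_R(M)=\Ass_R(M)$, which is finite. One small presentational point: when reading Definition \ref{haj}, note that the multiplicities $\mu^0(\fm_i,M)$ are allowed to be infinite cardinals; the finiteness constraint in the definition is only on the index set $\{\fm_1,\dots,\fm_k\}$, which is precisely what matches $\Ass_R(M)$ being finite, so your reading is the right one, but it is worth flagging since the phrase ``finitely many summands'' could otherwise be misread.
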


\begin{proof}
	See \cite[Lemma 2.3 (b)]{Haj}.
\end{proof}	

\begin{lem}
	\label{max}
	Let $I$ be an ideal of a Noetherian ring $R$ and $M$ be an  $R$-module such that $\Supp_R(M)\subseteq \Max(R)$. Then the
	following statements are equivalent:
	\begin{itemize}
		\item[(i)] $M$ is $I$-$ETH$-weakly cofinite.
		\item[(ii)] The $R$-module $\Hom_R(R/I,M)$ is weakly Laskerian.
	\end{itemize}
\end{lem}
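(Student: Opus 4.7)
The plan is as follows. The direction (i)$\Rightarrow$(ii) is immediate: specialize the definition of $I$-$ETH$-weakly cofiniteness to $i=0$. For the converse, I would prove the stronger statement that each $\Ext^i_R(R/I,M)$ is weakly Artinian; since weakly Artinian modules are weakly Laskerian (Lemma \ref{haj1}(v)), this yields (i).

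For the reduction, observe that for every $i\geq 0$,
\[
\Supp_R\bigl(\Ext^i_R(R/I,M)\bigr)\subseteq V(I)\cap\Supp_R(M)\subseteq V(I)\cap\Max(R),
\]
so every prime in the support of $\Ext^i_R(R/I,M)$ is maximal. By Lemma \ref{haj1}(iii), it then suffices to show that $V(I)\cap\Supp_R(M)$ is a finite set. The extraction of finiteness will come from the hypothesis on $\Hom_R(R/I,M)$.

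The key step is the equality $\Supp_R\bigl(\Hom_R(R/I,M)\bigr)=V(I)\cap\Supp_R(M)$. The inclusion $\subseteq$ is standard. For the reverse inclusion, fix $\fm\in V(I)\cap\Supp_R(M)$; by hypothesis $\fm$ is maximal. The localization $M_\fm$ is nonzero and satisfies $\Supp_{R_\fm}(M_\fm)\subseteq\{\fm R_\fm\}$, and in the Noetherian local ring $R_\fm$ a module supported only at the maximal ideal is annihilated elementwise by a power of that ideal (apply $V(\Ann_{R_\fm}(x))\subseteq\{\fm R_\fm\}$ for each $0\neq x$). Hence $M_\fm$ has a nonzero element killed by $\fm R_\fm$, and a fortiori by $IR_\fm$ since $I\subseteq\fm$. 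This gives a nonzero element of $\Hom_{R_\fm}(R_\fm/IR_\fm,M_\fm)=\Hom_R(R/I,M)_\fm$, i.e.\ $\fm\in\Supp_R\bigl(\Hom_R(R/I,M)\bigr)$.

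With this equality in hand, the proof concludes quickly: $\Hom_R(R/I,M)$ is weakly Laskerian by hypothesis, and its support lies in $\Max(R)$, so Lemma \ref{haj1} (combining (iii) and (v)) implies that $\Supp_R\bigl(\Hom_R(R/I,M)\bigr)$ is a finite set of maximal ideals. By the key equality, $V(I)\cap\Supp_R(M)$ is finite, and therefore so is $\Supp_R(\Ext^i_R(R/I,M))$ for every $i$; a final application of Lemma \ref{haj1}(iii) finishes (i). The only non-formal ingredient is the local torsion argument establishing the equality of supports; everything else is a bookkeeping application of Lemma \ref{haj1}.
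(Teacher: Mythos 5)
Your proof is correct, but it follows a genuinely different route from the paper's. The paper reduces to the $I$-torsion part of $M$: since $\Hom_R(R/I,\G_I(M))\cong\Hom_R(R/I,M)$ is weakly Laskerian with support in $\Max(R)$, Lemma~\ref{haj1} makes it weakly Artinian; then \cite[Lemma 2.8]{Haj} upgrades this to $\G_I(M)$ itself being weakly Artinian; Grothendieck vanishing (using $\dim M\le 0$) gives $\lc^i_I(M)=0$ for $i\ge 1$, so all $\lc^i_I(M)$ are $I$-weakly cofinite; and finally Melkersson's \cite[Corollary 3.10]{Mel} converts weakly cofiniteness of all the local cohomology modules into weak Laskerianness of all $\Ext^i_R(R/I,M)$. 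You instead work directly with the $\Ext$ modules: you show each $\Ext^i_R(R/I,M)$ has support inside $V(I)\cap\Supp_R(M)\subseteq\Max(R)$, and you extract finiteness of this set from the hypothesis via the support identity $\Supp_R(\Hom_R(R/I,M))=V(I)\cap\Supp_R(M)$, whose nontrivial inclusion is supplied by the local socle argument (a nonzero module over $R_\fm$ supported only at $\fm R_\fm$ contains a nonzero element killed by $\fm R_\fm$, hence by $IR_\fm$). Both arguments hinge on Lemma~\ref{haj1}, but yours is more self-contained: it dispenses with the detour through $\G_I(M)$, the citation of \cite[Lemma 2.8]{Haj}, the use of Grothendieck vanishing, and the appeal to Melkersson's cofiniteness criterion, replacing all of these with a short, elementary localization lemma. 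The trade-off is that the paper's route illustrates the interplay with local cohomology that is thematically central to the rest of the article, whereas yours is the leaner proof of this particular equivalence.
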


\begin{proof}
	{\rm(i)}$\Longrightarrow${\rm(ii)}  follows by definition.
	
In order to prove {\rm(ii)}$\Longrightarrow${\rm(i)} note that

\begin{center}
	$ \Hom_R(R/I,\G_I(M))\cong \Hom_R(R/I,M).$
\end{center}

  Since $\Supp_R(\G_I(M))\subseteq \Max(R)$, it is easy to see that $\Hom_R(R/I,\G_I(M))$ is a weakly Artinian $R$-module and so by  \cite[Lemma 2.8]{Haj} $\G_I(M)$ is also a weakly Artinian $R$-module. On the other hand by \cite[Theorem 6.1.2]{BSh} $\lc^{i}_{I}(M)=0$ for all $i\geq 1$. Since any weakly Artinian  $R$-module is weakly Laskerian, therefore
  $\lc^{i}_{I}(M)$ is $I$-weakly cofinite for all $i\geq 0$. Now by  \cite[Corollary 3.10]{Mel},
  it follows that $\Ext^{i}_R(R/I,M)$ are weakly Laskerian for all $i\geq 0$, as required.
\end{proof}

The following Lemma is well-known for $I$-cofinite modules.


\begin{lem}\label{third}
	If  $0\lo N\lo L\lo T\lo 0$ is exact and two of the modules in the sequence are $I$-$ETH$-weakly cofinite, then so is the third
	one.
\end{lem}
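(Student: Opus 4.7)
The plan is to apply the functor $\Hom_R(R/I,-)$ to the short exact sequence and exploit the resulting long exact sequence of $\Ext$ modules together with the fact that the class of weakly Laskerian modules is a Serre subcategory of $\mathscr{C}(R)$ (as established by Divaani-Aazar and Mafi in \cite{DiM}); in particular it is closed under taking submodules, quotients and extensions. Since $I$-$ETH$-weakly cofiniteness is, by Definition \ref{def}, precisely the requirement that $\Ext^i_R(R/I,-)$ land in the weakly Laskerian subcategory for every $i\geq 0$, the lemma will reduce to a purely homological statement inside a Serre subcategory.

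Concretely, from $0\to N\to L\to T\to 0$ I would form, for each $i\geq 0$, the long exact sequence
\begin{equation*}
\cdots\to\Ext^{i-1}_R(R/I,T)\to\Ext^{i}_R(R/I,N)\to\Ext^{i}_R(R/I,L)\to\Ext^{i}_R(R/I,T)\to\Ext^{i+1}_R(R/I,N)\to\cdots
\end{equation*}
and then split it at each spot into short exact sequences
\begin{equation*}
0\to \Coker\,d_{i-1}\to \Ext^{i}_R(R/I,X)\to \Ker\,d_{i}\to 0,
\end{equation*}
where $X\in\{N,L,T\}$ is the module whose $\Ext^i$ we wish to control and $d_\bullet$ are the connecting maps. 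Assuming that two of $N,L,T$ are $I$-$ETH$-weakly cofinite, I will treat the three cases separately. If $N$ and $T$ are $I$-$ETH$-weakly cofinite, then $\Ext^{i}_R(R/I,L)$ sits in an exact sequence with weakly Laskerian neighbours on both sides, and the Serre property gives weak Laskerianness of $\Ext^{i}_R(R/I,L)$ for every $i$. If $N$ and $L$ (resp. $L$ and $T$) are $I$-$ETH$-weakly cofinite, then $\Ext^i_R(R/I,T)$ (resp.\ $\Ext^i_R(R/I,N)$) is sandwiched between a quotient and a submodule of weakly Laskerian $R$-modules, hence is itself weakly Laskerian by the same Serre argument.

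There is no genuine obstacle here: the only point to be careful about is that the image and kernel terms appearing in the long exact sequence are, respectively, quotients of $\Ext^{i-1}$ or submodules of $\Ext^i$ of the hypothesised $I$-$ETH$-weakly cofinite modules, so the Serre closure under subobjects and quotients is used before the closure under extensions is invoked. Once that bookkeeping is done, the conclusion holds uniformly in $i$, which yields $I$-$ETH$-weak cofiniteness of the remaining module and completes the proof.
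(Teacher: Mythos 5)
Your argument is correct and is exactly the standard argument the paper has in mind: the lemma is stated without proof, with the remark that it is well known for $I$-cofinite modules, and the classical proof in that setting is the same long-exact-sequence bookkeeping with finitely generated modules replaced by weakly Laskerian ones. The Serre-subcategory fact you invoke (closure of weakly Laskerian modules under submodules, quotients, and extensions) is established in the cited work of Divaani-Aazar and Mafi, so splitting the long exact sequence of $\Ext^i_R(R/I,-)$ into short exact sequences at each spot, as you do, settles all three cases.
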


\begin{thm}
\label{FD0}
 Let $I$ be an ideal of a Noetherian ring $R$ and $M$ be an  ${\rm FD_{\leq0}}$ {\rm{(}}or minimax{\rm{)}} $R$-module.
  Then the
 following statements are equivalent:
 \begin{itemize}
 \item[(i)] $M$ is $I$-$ETH$-weakly cofinite.
\item[(ii)] The $R$-module $\Hom_R(R/I,M)$ is weakly Laskerian.
 \end{itemize}
\end{thm}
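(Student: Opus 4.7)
The implication (i)$\Longrightarrow$(ii) is immediate from the definition, so the real content is the converse. My plan is to reduce to the situation already handled in Lemma \ref{max} by using the defining decomposition of an ${\rm FD_{\leq 0}}$ (respectively, minimax) module.

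By hypothesis there exists a finitely generated submodule $N \subseteq M$ such that $\dim(M/N) \leq 0$ (in the minimax case, $M/N$ is Artinian, which also forces $\Supp_R(M/N) \subseteq \Max(R)$). Consider the short exact sequence
$$0 \lo N \lo M \lo M/N \lo 0.$$
Since $N$ is finitely generated, $\Ext^{i}_R(R/I,N)$ is finitely generated, hence weakly Laskerian, for every $i \geq 0$, so $N$ is $I$-$ETH$-weakly cofinite. Applying $\Hom_R(R/I,-)$ yields the exact piece
$$\Hom_R(R/I,M) \lo \Hom_R(R/I,M/N) \lo \Ext^1_R(R/I,N),$$
in which the left-hand term is weakly Laskerian by assumption (ii) and the right-hand term is finitely generated. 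Using that the class of weakly Laskerian modules is a Serre subcategory (closed under subquotients and extensions), I conclude that $\Hom_R(R/I,M/N)$ is weakly Laskerian.

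Now $M/N$ has support contained in $\Max(R)$, so Lemma \ref{max} applies and gives that $M/N$ is $I$-$ETH$-weakly cofinite. Finally, from the short exact sequence $0 \to N \to M \to M/N \to 0$ together with the closure property established in Lemma \ref{third}, I deduce that $M$ itself is $I$-$ETH$-weakly cofinite, completing the proof.

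The only delicate point is the passage from $\Hom_R(R/I,M)$ being weakly Laskerian to $\Hom_R(R/I,M/N)$ being weakly Laskerian; this rests entirely on the Serre property of weakly Laskerian modules (Divaani-Aazar--Mafi), which lets the quotient in the $\Hom$-sequence sit between two weakly Laskerian modules. Once that is in hand, the rest is routine invocation of Lemmas \ref{max} and \ref{third}. The minimax case requires no change, since an Artinian quotient automatically has support in $\Max(R)$.
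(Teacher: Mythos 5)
Your proof is correct and follows essentially the same route as the paper: decompose $M$ via the defining short exact sequence $0 \to N \to M \to M/N \to 0$, use the long exact $\Ext$-sequence and the Serre property of weakly Laskerian modules to see that $\Hom_R(R/I,M/N)$ is weakly Laskerian, invoke Lemma \ref{max} on $M/N$ (which has support in $\Max(R)$), and close with Lemma \ref{third}. The only difference is that you spell out explicitly the Serre-subcategory argument and the $I$-$ETH$-weak cofiniteness of $N$, which the paper leaves implicit.
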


\begin{proof}
	By definition there is a finitely generated submodule $N$ of $M$ such that  ${\rm
		dim}(M/N)\leq 0$. Also, the exact sequence
	$$0\rightarrow N \rightarrow M \rightarrow M/N \rightarrow 0,\,\,\,\,\,\,(*)$$
	induces the following exact sequence
	$$0\longrightarrow {\rm Hom}_R(R/I,N)\longrightarrow{\rm
		Hom}_R(R/I,M)\longrightarrow {\rm Hom}_R(R/I,M/N)$$$$\longrightarrow {\rm
		Ext}^1_R(R/I,N).$$ Whence, it follows
		that the $R$-modules $\Hom_R(R/I,M/N)$ is weakly Laskerian. Therefore, in view of Lemma \ref{max}, the $R$-module $M/N$ is
	$I$-$ETH$-weakly cofinite. Now it follows from the exact sequence $(*)$ and Lemma \ref{third} that $M$ is
	$I$-$ETH$-weakly cofinite.
\end{proof}


We are now ready to state and prove the first main theorem of this section. The
following theorem is a generalization of 
\cite[Proposition 3.2]{BNS2}. In fact, we remove $I$-torsion  condition from this theorem. Note that $I$ is not dimension one too.

\begin{lem} \label {asli}
Let $R$ be a Noetherian ring and $I$ be an ideal of $R$ {\rm (}not necessary
dimension one{\rm )}. Let $M$ be a non-zero $R$-module {\rm (}not necessary
$I$-torsion{\rm )} such that  $\dim M \leq 1$. Then the following conditions are
equivalent:
 \begin{itemize}
 \item[(i)] $M$ is $I$-$ETH$-weakly cofinite.
 \item[(ii)] $\lc^{i}_{I}(M)$ are $I$-weakly cofinite for all $i$.
\item[(iii)] The $R$-modules $\Hom_R(R/I,M)$ and $\Ext^{1}_R(R/I,M)$ are weakly Laskerian.
 \end{itemize}
\end{lem}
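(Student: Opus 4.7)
The implication (i) $\Rightarrow$ (iii) is immediate from Definition \ref{def}. My plan is to prove (iii) $\Rightarrow$ (ii) and (ii) $\Rightarrow$ (i), exploiting the Grothendieck spectral sequence $E_2^{p,q} = \Ext^p_R(R/I,\lc^q_I(M)) \Rightarrow \Ext^{p+q}_R(R/I,M)$. Since $\dim M \leq 1$ (combined with a direct-limit reduction to the finitely generated case) forces $\lc^i_I(M)=0$ for $i \geq 2$, this spectral sequence has only two rows and yields the long exact sequence
$$\cdots \to \Ext^{n+1}_R(R/I,\G_I(M)) \to \Ext^{n+1}_R(R/I,M) \to \Ext^n_R(R/I,\lc^1_I(M)) \to \Ext^{n+2}_R(R/I,\G_I(M)) \to \cdots$$
From this, (ii) $\Rightarrow$ (i) will be immediate: if $\G_I(M)$ and $\lc^1_I(M)$ are $I$-weakly cofinite, the outer terms are weakly Laskerian for every $n$, forcing each $\Ext^n_R(R/I,M)$ to be weakly Laskerian.

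For (iii) $\Rightarrow$ (ii) I would use the short exact sequence $0 \to \G_I(M) \to M \to \bar M \to 0$ with $\bar M := M/\G_I(M)$. Since $\G_I(\bar M)=0$ yields $\Hom_R(R/I,\bar M)=0$, the induced long exact sequence of $\Ext^*_R(R/I,-)$ gives $\Hom_R(R/I,\G_I(M)) \cong \Hom_R(R/I,M)$ together with an embedding $\Ext^1_R(R/I,\G_I(M)) \hookrightarrow \Ext^1_R(R/I,M)$, both weakly Laskerian by (iii). As $\G_I(M)$ is $I$-torsion with $\dim \leq 1$, \cite[Proposition 3.2]{BNS2} then gives that $\G_I(M)=\lc^0_I(M)$ is $I$-weakly cofinite, so every $\Ext^p_R(R/I,\G_I(M))$ is weakly Laskerian. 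Noting also $\lc^1_I(M)\cong\lc^1_I(\bar M)$ (since $\lc^i_I(\G_I(M))=0$ for $i\geq 1$ by the \v{C}ech description), feeding the preceding into the $n=0$ portion of the spectral-sequence LES shows that $\Hom_R(R/I,\lc^1_I(M))$ is weakly Laskerian.

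The crucial remaining step is the support claim $\Ass_R(\lc^1_I(M)) \subseteq \Max(R)$: if $\fp \in \Ass_R(\lc^1_I(\bar M))$ were non-maximal, then $\dim R/\fp=1$ together with $\dim \bar M \leq 1$ would force $\fp$ to be minimal in $\Supp \bar M$, so $\bar M_{\fp}$ would be a nonzero $\fp R_{\fp}$-torsion $R_{\fp}$-module whose nonzero socle is killed by $\fp R_{\fp} \supseteq I R_{\fp}$, contradicting $\Hom_{R_{\fp}}(R_{\fp}/IR_{\fp},\bar M_{\fp})=0$, which is obtained by localizing the $I$-torsion freeness $\Hom_R(R/I,\bar M)=0$. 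Combined with the finiteness of $\Ass_R(\lc^1_I(M))=\Ass_R(\Hom_R(R/I,\lc^1_I(M)))$ (coming from weak Laskerianness of this Hom), Lemma \ref{haj1} makes $\lc^1_I(M)$ weakly Artinian, whence Lemma \ref{max} gives that $\lc^1_I(M)$ is $I$-$ETH$-weakly cofinite; since $\lc^1_I(M)$ is $I$-torsion, this coincides with $I$-weakly cofiniteness, establishing (ii). The support claim $\Ass_R(\lc^1_I(M)) \subseteq \Max(R)$ is the heart of the argument, as it sidesteps an otherwise circular spectral-sequence attempt to compute $\Ext^1_R(R/I,\lc^1_I(M))$ directly from the hypothesis.
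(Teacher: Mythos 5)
Your proof is correct and follows the same overall circuit $(\mathrm{iii})\Rightarrow(\mathrm{ii})\Rightarrow(\mathrm{i})\Rightarrow(\mathrm{iii})$ as the paper, but two of the individual steps are carried out by genuinely different means. For $(\mathrm{ii})\Rightarrow(\mathrm{i})$ you unwind the two-row Grothendieck spectral sequence into a long exact sequence and invoke that weakly Laskerian modules form a Serre subcategory; the paper instead cites Melkersson's Corollary 3.10 as a black box (and indeed the paper's text contains a slip there, labelling this implication $(\mathrm{i})\Rightarrow(\mathrm{ii})$ and writing ``weakly Artinian'' where ``weakly Laskerian'' is meant), so your version is both self-contained and cleaner. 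For the key support claim, the paper shows $\Supp_R(\lc^1_I(M))\subseteq\Max(R)$ directly by localizing at a one-dimensional prime $\fp\in\Supp_R(M)$ and applying Grothendieck vanishing to the zero-dimensional $R_\fp$-module $M_\fp$, whereas you pass to $\bar M=M/\G_I(M)$, observe that a non-maximal $\fp\in\Ass_R(\lc^1_I(\bar M))$ must be minimal in $\Supp\bar M$, and then reach a contradiction via the nonzero socle of $\bar M_\fp$ being killed by $IR_\fp$. Both localizations work; the paper's route is slightly more economical because it does not need the auxiliary reduction to $\bar M$, while yours has the advantage of exposing exactly where the $I$-torsion-freeness of $\bar M$ enters. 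The $n=0$ segment of your spectral-sequence long exact sequence plays the role of \cite[Theorem 4.1(c)]{AM} in showing $\Hom_R(R/I,\lc^1_I(M))$ is weakly Laskerian; these are equivalent mechanisms. All remaining steps (the use of \cite[Proposition 3.2]{BNS2}, the weakly Artinian characterization via Lemma \ref{haj1}, and the vanishing $\lc^i_I(M)=0$ for $i\geq 2$) match the paper.
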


\begin{proof}
{\rm(iii)}$\Longrightarrow${\rm(ii)} Using the exact sequence
$$0\rightarrow\G_I(M)\rightarrow M\rightarrow M/\G_I(M) \rightarrow 0,$$
we get the exact sequence
\begin{center}
$0\lo \Hom_R(R/I,\G_I(M))\lo \Hom_R(R/I,M)\lo \Hom_R(R/I,M/\G_I(M))\lo
\Ext^{1}_R(R/I,\G_I(M))\lo \Ext^{1}_R(R/I,M).$
\end{center}
Since $\Hom_R(R/I,M/\G_I(M))=0$, it follows that the $R$-modules
\begin{center}
$\Hom_R(R/I,\G_I(M))$ and $\Ext^{1}_R(R/I,\G_I(M))$
\end{center}
are weakly Laskerian, and so in view of  \cite[Proposition 3.2]{BNS2} the
$R$-module $\G_I(M)$ is $I$-weakly cofinite. Now as the $R$-module $
\Ext^{1}_R(R/I,M)$ is weakly Laskerian, it follows from \cite[Theorem 4.1 (c)]{AM}
that the $R$-module $\Hom_R(R/I,\lc^{1}_{I}(M))$ is weakly Laskeran. If $\fp \in \Supp_R(\lc^{1}_{I}(M))\subseteq \Supp_R(M)$, then 
$$\lc^{1}_{IR_{\fp}}(M_\fp)\cong\lc^{1}_{I}(M)_\fp\neq 0.$$

Since $\dim M \leq 1$, it is easy to see that $\dim R/\fp=0$ or $\dim R/\fp=1$.  If $\dim R/\fp=1$ then $ M_\fp$ is a zero dimensional $R_\fp$-module that implies $\lc^{1}_{IR_{\fp}}(M_\fp)=0$  by using Grothendieck vanishing theorem \cite[Theorem 6.1.2]{BSh} which is a contradiction. Thus $\dim R/\fp=0$ and so $\fp$ is a maximal ideal. So we have the following inclusion 
$$\Supp_R(\Hom_R(R/I,\lc^{1}_{I}(M)))\subseteq \Supp_R(\lc^{1}_{I}(M))\subseteq \Max R.$$
By Lemma \ref{haj1} (v), it is easy to see that the $R$-module $\Hom_R(R/I,\lc^{1}_{I}(M))$ is weakly Artinian and
so by \cite[Lemma 2.8]{Haj} the $R$-module $\lc^{1}_{I}(M)$ is weakly Artinian. Since any weakly Artinian module is weakly Laskerian, therefore in view of  \cite[Theorem 6.1.2]{BSh}  the $R$-module
$\lc^{i}_{I}(M)$ is $I$-weakly cofinite for all $i\geq 0$.

{\rm(i)}$\Longrightarrow${\rm(ii)}  by  \cite[Corollary 3.10]{Mel},
it follows that $\Ext^{i}_R(R/I,M)$ are weakly Artinian for all $i\geq 0$, as required.

{\rm(i)}$\Longrightarrow${\rm(iii)} It is obviously true.
\end{proof}

The following theorem is a generalization of \cite[Theorem 3.1]{AB} that in what
follows the next theorem plays an important role.

\begin{thm} \label {wl}
Let $R$ be a Noetherian ring and $I$ be an ideal of $R$. Let $M$ be an ${\rm
FD_{\leq 1}}$  $R$-module. Then $M$ is $I$-$ETH$-weakly cofinite if and only if
 $\Hom_R(R/I,M)$ and  $\Ext^{1}_R(R/I,M)$ are weakly Lakerian.
\end{thm}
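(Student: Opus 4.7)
The forward implication is immediate from Definition \ref{def}, since weakly Laskerianness of $\Ext^i_R(R/I,M)$ for all $i$ in particular gives it for $i=0,1$. The substance is in the converse, and my plan is to reduce the problem to the dimension $\leq 1$ case already handled in Lemma \ref{asli}.

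Since $M$ is ${\rm FD_{\leq 1}}$, by definition there is a finitely generated submodule $N\subseteq M$ with $\dim(M/N)\leq 1$. I would then work with the short exact sequence
\begin{equation*}
0\lo N\lo M\lo M/N\lo 0.
\end{equation*}
Because $N$ is finitely generated, each $\Ext^i_R(R/I,N)$ is finitely generated, hence weakly Laskerian; so $N$ is $I$-$ETH$-weakly cofinite. In view of Lemma \ref{third}, it therefore suffices to prove that $M/N$ is $I$-$ETH$-weakly cofinite.

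The key step is to show this through the quotient. Applying $\Hom_R(R/I,-)$ to the above sequence yields the long exact sequence
\begin{equation*}
\Ext^i_R(R/I,N)\lo \Ext^i_R(R/I,M)\lo \Ext^i_R(R/I,M/N)\lo \Ext^{i+1}_R(R/I,N)
\end{equation*}
for each $i$. Taking $i=0,1$, the outer terms are finitely generated (since $N$ is), and the term involving $M$ is weakly Laskerian by hypothesis. Because the class of weakly Laskerian modules is a Serre subcategory, it follows that $\Hom_R(R/I,M/N)$ and $\Ext^1_R(R/I,M/N)$ are weakly Laskerian.

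Now I apply Lemma \ref{asli} to the module $M/N$, which has $\dim(M/N)\leq 1$: the weakly Laskerianness of $\Hom_R(R/I,M/N)$ and $\Ext^1_R(R/I,M/N)$ forces $M/N$ to be $I$-$ETH$-weakly cofinite. Combining this with the $I$-$ETH$-weak cofiniteness of $N$ via Lemma \ref{third} gives that $M$ is $I$-$ETH$-weakly cofinite, as desired. I do not foresee a real obstacle here; the only minor care needed is verifying closure of weakly Laskerian modules under subquotients/extensions when extracting information from the long exact sequence, which is standard.
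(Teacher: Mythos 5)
Your proposal is correct and follows essentially the same path as the paper's proof: choose a finitely generated $N\subseteq M$ with $\dim(M/N)\le 1$, extract weakly Laskerianness of $\Hom_R(R/I,M/N)$ and $\Ext^1_R(R/I,M/N)$ from the long exact sequence, invoke Lemma \ref{asli} to get $I$-$ETH$-weak cofiniteness of $M/N$, and finish with Lemma \ref{third}. The only difference is cosmetic: you spell out that $N$ itself is $I$-$ETH$-weakly cofinite, while the paper leaves this implicit.
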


\begin{proof}
By definition there is a finitely generated submodule $N$ of $M$ such that  ${\rm
dim}(M/N)\leq 1$. Also, the exact sequence
$$0\rightarrow N \rightarrow M \rightarrow M/N \rightarrow 0,\,\,\,\,\,\,(*)$$
induces the following exact sequence
$$0\longrightarrow {\rm Hom}_R(R/I,N)\longrightarrow{\rm
Hom}_R(R/I,M)\longrightarrow {\rm Hom}_R(R/I,M/N)$$$$\longrightarrow {\rm
Ext}^1_R(R/I,N)\longrightarrow{\rm Ext}^1_R(R/I,M)\longrightarrow {\rm
Ext}^1_R(R/I,M/N)\longrightarrow {\rm Ext}^2_R(R/I,N).$$ Whence, it follows that
the $R$-modules $\Hom_R(R/I,M/N)$ and $\Ext^1_R(R/I,M/N)$ are weakly Laskerian. Therefore, in view of Proposition \ref{asli}, the $R$-module $M/N$ is
 $I$-$ETH$-weakly cofinite. Now it follows from the exact sequence $(*)$  and Lemma \ref{third} that $M$ is
 $I$-$ETH$-weakly cofinite.
\end{proof}




The following lemma is needed in the proof of second main result of this paper.

\begin{lem}
	\label{2.2}
	Let $I$ be an ideal of a Noetherian ring $R$, $M$ a non-zero $R$-module and
	$t\in\Bbb{N}_0$. Suppose that the $R$-module $\lc^{i}_\Phi(M)$ is $I$-$ETH$-weakly cofinite
	for all $i=0,...,t-1$, and the $R$-modules ${\rm Ext}^{t}_R(R/I,M)$ and ${\rm
		Ext}^{t+1}_R(R/I,M)$ are weakly Laskerian. Then the $R$-modules ${\rm
		Hom}_R(R/I,\lc^{t}_\Phi(M))$ and ${\rm Ext}^{1}_R(R/I,\lc^{t}_\Phi(M))$ are weakly Laskerian.
\end{lem}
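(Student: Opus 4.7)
The plan is to proceed by induction on $t$. For the base case $t=0$, the assumptions reduce to $\Hom_R(R/I,M)$ and $\Ext^1_R(R/I,M)$ being weakly Laskerian. I apply $\Hom_R(R/I,-)$ to the exact sequence
\[
0\longrightarrow \G_\Phi(M)\longrightarrow M\longrightarrow M/\G_\Phi(M)\longrightarrow 0.
\]
The decisive point is $\Hom_R(R/I,M/\G_\Phi(M))=0$: writing $I=(i_1,\dots,i_n)$ via Noetherianness, if $Ix\subseteq\G_\Phi(M)$ then for each $j$ some $J_j\in\Phi$ kills $i_jx$, and the system-of-ideals axiom produces $K\in\Phi$ contained in $J_1\cdots J_n\cdot I$ with $Kx=0$, forcing $x\in\G_\Phi(M)$. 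The long exact $\Ext$ sequence then yields $\Hom_R(R/I,\G_\Phi(M))\cong\Hom_R(R/I,M)$ and an injection $\Ext^1_R(R/I,\G_\Phi(M))\hookrightarrow\Ext^1_R(R/I,M)$, both weakly Laskerian since that class is closed under submodules.

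For the inductive step $t\geq 1$, I first reduce to the case $\G_\Phi(M)=0$. Since $\G_\Phi(M)=\lc^0_\Phi(M)$ is $I$-$ETH$-weakly cofinite and the weakly Laskerian modules form a Serre subcategory, the long $\Ext$ sequence associated to the short exact sequence above preserves weak Laskerianness of $\Ext^t_R(R/I,M/\G_\Phi(M))$ and $\Ext^{t+1}_R(R/I,M/\G_\Phi(M))$. A standard argument shows that $\Phi$-torsion modules have vanishing $\lc^i_\Phi$ for $i\geq 1$ — this rests on the fact that each $\E_R(R/\fp)$ is $\Phi$-torsion whenever some $I\in\Phi$ is contained in $\fp$, because every element of $\E_R(R/\fp)$ is killed by a power of $\fp$, so $\Phi$-torsion modules admit $\Phi$-torsion injective resolutions. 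Hence $\lc^i_\Phi(M/\G_\Phi(M))\cong \lc^i_\Phi(M)$ for all $i\geq 1$, and I may replace $M$ by $M/\G_\Phi(M)$ and assume $\G_\Phi(M)=0$.

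With $\G_\Phi(M)=0$, I set $E=\E_R(M)$ and $L=E/M$. Essentialness of the embedding $M\hookrightarrow E$ forces $\G_\Phi(E)=0$, hence $\Hom_R(R/I,E)=\G_I(E)\subseteq\G_\Phi(E)=0$; combined with $\Ext^i_R(R/I,E)=0$ and $\lc^i_\Phi(E)=0$ for $i\geq 1$ (by injectivity of $E$), the long exact sequences obtained from $0\to M\to E\to L\to 0$ produce the degree shifts
\[
\Ext^i_R(R/I,L)\cong\Ext^{i+1}_R(R/I,M)\qquad\text{and}\qquad \lc^i_\Phi(L)\cong \lc^{i+1}_\Phi(M)
\]
for every $i\geq 0$. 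These identifications translate the hypotheses on $M$ at level $t$ into the hypotheses on $L$ at level $t-1$, so the inductive hypothesis applied to $L$ yields weak Laskerianness of $\Hom_R(R/I,\lc^{t-1}_\Phi(L))$ and $\Ext^1_R(R/I,\lc^{t-1}_\Phi(L))$, which are precisely $\Hom_R(R/I,\lc^t_\Phi(M))$ and $\Ext^1_R(R/I,\lc^t_\Phi(M))$. The main technical obstacles are the two vanishings $\Hom_R(R/I,M/\G_\Phi(M))=0$ and $\lc^i_\Phi(\G_\Phi(M))=0$ for $i\geq 1$, both of which require careful exploitation of the system-of-ideals structure together with Noetherianness.
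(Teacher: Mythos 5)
Your proof is correct and follows essentially the same strategy as the paper's: induction on $t$, reduction to $\G_\Phi(M)=0$ via the short exact sequence $0\to\G_\Phi(M)\to M\to M/\G_\Phi(M)\to 0$, and a simultaneous degree shift in $\Ext^\bullet_R(R/I,-)$ and $\lc^\bullet_\Phi(-)$ obtained from $0\to M\to E\to E/M\to 0$ with $E$ an injective hull. You supply several verifications the paper leaves tacit (that $\Hom_R(R/I,M/\G_\Phi(M))=0$, that $\Phi$-torsion modules have vanishing higher $\lc^i_\Phi$, and that $\G_\Phi(E)=0$ by essentiality); note only that these, like the paper's own proof, implicitly require $I\in\Phi$ even though the lemma as stated does not list it, and that your identification $\Hom_R(R/I,E)=\G_I(E)$ should read as an inclusion $\Hom_R(R/I,E)\subseteq\G_I(E)$, which is what the argument actually uses.
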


\begin{proof}
 We use induction on $t$. The exact sequence 
	
	$$0\lo \G_\Phi(M)\lo M \lo M/\G_\Phi(M)
	\lo 0,\,\,\,\,\,\,(*)$$
	
	induces the following exact sequence:
	
	$$0\longrightarrow {\Hom}_R(R/I,\G_\Phi(M))\lo{\Hom}_R(R/I,M)\lo {\Hom}_R(R/I,M/\G_\Phi(M))$$$$\lo {\Ext}^1_R(R/I,\G_\Phi(M))\lo{\Ext}^1_R(R/I,M).$$
	
	Since ${\Hom}_R(R/I,M/\G_\Phi(M))=0$ so ${\rm
		Hom}_R(R/I,\G_\Phi(M))$ and ${\rm Ext}^{1}_R(R/I,\G_\Phi(M))$ are weakly Laskerian. Assume inductively that $t>0$ and that we have established the result for non-negative integers smaller than $t$. By applying the functor ${\Hom}_R(R/I,-)$  to the exact sequence $(*)$, we can deduce that ${\Ext}_R^j(R/I,M/\G_\Phi(M))$
	is weakly Laskerian for $j=t,t+1$. On the other hand, $\lc_I^0(M/\G_\Phi(M))=0$ and $\lc_\Phi^j(M/\G_\Phi(M))\cong \lc_\Phi^j(M)$ for all $j>0$. Therefore we may assume that $\G_\Phi(M)=0$. Let $E$ be an injective hull of $M$ and put $N=E/M$. Then ${\Hom}_R(R/I,E)=0=\G_\Phi(E)$. Hence  ${\Ext}_R^j(R/I,N)\cong {\Ext}_R^{j+1}(R/I,M)$ and $\lc_\Phi^j(N)\cong \lc_\Phi^{j+1}(M)$ for all $j\geq 0$. Now, the induction hypothesis yields that ${\rm
		Hom}_R(R/I,\lc_\Phi^{t-1}(N))$ and ${\rm Ext}^{1}_R(R/I,\lc_\Phi^{t-1}(N))$ are weakly Laskerian and so ${\rm
		Hom}_R(R/I,\lc_\Phi^{t}(M))$ and ${\rm Ext}^{1}_R(R/I,\lc_\Phi^{t}(M))$ are weakly Laskerian, as required.
\end{proof}

We are now ready to state and prove the following main results (Theorem
\ref{homext} and  the Corollaries \ref{homext1}, \ref{cof}, \ref{cof1}, \ref{cof2}) which are
extension of \cite[Theorem 3.4 and Corollaries 3.5 and 3.6]{AB}, Bahmanpour-Naghipour's results in
\cite{BN, BN2} in terms of weakly Laskerian modules, Hong Quy's result in
\cite{hung}, Divaani-Aazar and Mafi's result in  \cite[Corrolary  2.7]{DiM}, \cite[Theorem 2.13]{Z} and \cite[Corollaries 2.6 and 2.7]{NT}.

\begin{thm}
	\label{homext} Let $R$ be a Noetherian ring and $I\in \Phi$ an ideal of $R$. Let $t\in\Bbb{N}_0$ be an integer and $M$ an $R$-module such that $\Ext^i_R(R/I,M)$ are weakly Laskerian for all $i\leq t+1$. Let the $R$-modules $\lc^{i}_\Phi(M)$ are ${\rm FD_{\leq 1}}$ $R$-modules for all $i<t$. Then, the following conditions hold:
	\begin{itemize}
		\item[(i)]  The $R$-modules $\lc^{i}_\Phi(M)$ are $I$-$ETH$-weakly cofinite {\rm{(}}in particular $\Phi$-weakly cofinite{\rm{)}} for all $i<t$.
		\item[(ii)]  For all ${\rm FD_{\leq 0}}$ {\rm{(}}or minimax{\rm{)}} submodule $N$ of
		$\lc^{t}_\Phi(M)$, the $R$-modules $${\rm Hom}_R(R/I,\lc^{t}_\Phi(M)/N)\,\,\,
		{\rm and}\,\,\,{\rm Ext}^1_R(R/I,\lc^{t}_\Phi(M)/N)$$ are weakly Laskerian. In
		particular the sets $$\Ass_R({\rm Hom}_R(R/I,\lc^{t}_\Phi(M)/N))\,\,\, {\rm
			and}\,\,\,\Ass_R({\rm Ext}^1_R(R/I,\lc^{t}_\Phi(M)/N))$$ are  finite sets.
	\end{itemize}
\end{thm}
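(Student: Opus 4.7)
The plan is a coupled induction on $t$ that advances (i) and (ii) in tandem, with Lemma \ref{2.2} serving as the engine that manufactures weak Laskerianness of $\Hom_R(R/I,\lc^t_\Phi(M))$ and $\Ext^1_R(R/I,\lc^t_\Phi(M))$ out of the $I$-$ETH$-weakly cofiniteness of the lower local cohomologies.

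For the base case $t=0$, part (i) is vacuous. For (ii), Lemma \ref{2.2} applies with empty hypothesis on the range $i<0$ and produces the weak Laskerianness of $\Hom_R(R/I,\lc^0_\Phi(M))$ and $\Ext^1_R(R/I,\lc^0_\Phi(M))$. Given an ${\rm FD}_{\leq 0}$ or minimax submodule $N$ of $\lc^0_\Phi(M)$, I would first note that $\Hom_R(R/I,N)$ embeds in $\Hom_R(R/I,\lc^0_\Phi(M))$ and is therefore weakly Laskerian (weakly Laskerian modules being a Serre subcategory). Theorem \ref{FD0} then forces $N$ itself to be $I$-$ETH$-weakly cofinite, so every $\Ext^j_R(R/I,N)$ is weakly Laskerian. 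Chasing the long exact sequence of $\Ext^\bullet_R(R/I,-)$ attached to $0\to N\to \lc^0_\Phi(M)\to \lc^0_\Phi(M)/N\to 0$ then pinches $\Hom_R(R/I,\lc^0_\Phi(M)/N)$ and $\Ext^1_R(R/I,\lc^0_\Phi(M)/N)$ between weakly Laskerian modules, giving (ii).

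For the inductive step, I would assume the theorem for all non-negative integers below $t$ (the hypotheses on $\Ext^i_R(R/I,M)$ for $i\le t$ and on the ${\rm FD}_{\leq 1}$-ness of $\lc^i_\Phi(M)$ for $i<t-1$ are inherited from the original assumptions). The $(t-1)$-case of (ii) applied with $N=0$ yields that $\Hom_R(R/I,\lc^{t-1}_\Phi(M))$ and $\Ext^1_R(R/I,\lc^{t-1}_\Phi(M))$ are weakly Laskerian; since $\lc^{t-1}_\Phi(M)$ is ${\rm FD}_{\leq 1}$ by hypothesis, Theorem \ref{wl} promotes this to $\lc^{t-1}_\Phi(M)$ being $I$-$ETH$-weakly cofinite. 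Combined with the $(t-1)$-case of (i), this establishes (i) at level $t$. Then, feeding (i) into Lemma \ref{2.2}, I obtain that $\Hom_R(R/I,\lc^t_\Phi(M))$ and $\Ext^1_R(R/I,\lc^t_\Phi(M))$ are weakly Laskerian, and the argument of the base case (invoke Theorem \ref{FD0} on $N$ via the embedding $\Hom_R(R/I,N)\hookrightarrow \Hom_R(R/I,\lc^t_\Phi(M))$, then run the long exact sequence of $0\to N\to \lc^t_\Phi(M)\to \lc^t_\Phi(M)/N\to 0$) closes (ii). The in-particular statement on associated primes is immediate from the definition of weakly Laskerian.

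The main obstacle is precisely the entanglement of (i) and (ii) across levels: (i) at stage $t$ requires (ii) at stage $t-1$ to upgrade an ${\rm FD}_{\leq 1}$ module to an $I$-$ETH$-weakly cofinite one via Theorem \ref{wl}, while (ii) at stage $t$ requires (i) at stage $t$ to feed Lemma \ref{2.2}. The key insight that makes the induction go through is that Theorem \ref{FD0} lets us convert mere weak Laskerianness of $\Hom_R(R/I,N)$ into full $I$-$ETH$-weakly cofiniteness of $N$, which is what unlocks every $\Ext^j_R(R/I,N)$ in the long exact sequence. Verifying that the numerical bookkeeping (the shift $i\le t+1$ versus $i\le t$, and $i<t$ versus $i<t-1$) lines up correctly at each step is the routine but necessary part of making this precise.
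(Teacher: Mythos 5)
Your proposal is correct and uses exactly the same ingredients as the paper's proof: Lemma \ref{2.2} to produce the weak Laskerianness of $\Hom_R(R/I,\lc^{t-1}_\Phi(M))$ and $\Ext^1_R(R/I,\lc^{t-1}_\Phi(M))$, Theorem \ref{wl} to upgrade this to $I$-$ETH$-weak cofiniteness of $\lc^{t-1}_\Phi(M)$ since it is ${\rm FD}_{\le 1}$, Theorem \ref{FD0} to make the ${\rm FD}_{\le 0}$ or minimax submodule $N$ of $\lc^t_\Phi(M)$ fully $I$-$ETH$-weakly cofinite once $\Hom_R(R/I,N)$ is seen to be weakly Laskerian, and the six-term piece of the long exact $\Ext_R^\bullet(R/I,-)$ sequence to squeeze $\Hom_R(R/I,\lc^t_\Phi(M)/N)$ and $\Ext^1_R(R/I,\lc^t_\Phi(M)/N)$ between weakly Laskerian modules. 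The only departure from the paper is cosmetic: you phrase the argument as a coupled induction that advances (i) and (ii) simultaneously (invoking (ii) at level $t-1$ with $N=0$ to drive the inductive step of (i)), whereas the paper first proves (i) by a standalone induction on $t$ using Lemma \ref{2.2} and Theorem \ref{wl} directly, and then deduces (ii) in one pass from (i), Lemma \ref{2.2}, and Theorem \ref{FD0}. Both packagings carry the same mathematical content, and your bookkeeping of the shifts $i\le t+1$ versus $i\le t$ and $i<t$ versus $i<t-1$ checks out; you also make explicit the embedding $\Hom_R(R/I,N)\hookrightarrow\Hom_R(R/I,\lc^t_\Phi(M))$ that the paper leaves implicit before invoking Theorem \ref{FD0}, which is a helpful clarification rather than a substantive change.
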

\proof (i) We proceed by induction on $ t$. In the case $ t = 0$ there is nothing to prove. So, let $ t >0 $ and the
result has been proved for smaller values of $t$. By the inductive assumption,
$\lc^{i}_\Phi(M)$ is $I$-$ETH$-weakly cofinite for $ i = 0, 1, ..., t-2$. Hence by Lemma
\ref{2.2} and assumption, ${\rm Hom}_R(R/I,\lc^{t-1}_\Phi(M))\,\,\, {\rm
	and}\,\,\,{\rm Ext}^1_R(R/I,\lc^{t-1}_\Phi(M))$ are weakly Laskerian. Therefore by
Theorem
\ref{wl}, $\lc^{i}_\Phi(M)$ is $I$-$ETH$-weakly cofinite {\rm{(}}in particular, $\Phi$-weakly cofinite{\rm{)}} for all $ i < t$. This completes the inductive step.\\
(ii) In view of   (i) and Lemma \ref{2.2},  ${\rm Hom}_R(R/I,\lc^{t}_\Phi(M))\,\,\, {\rm
	and}\,\,\,{\rm Ext}^1_R(R/I,\lc^{t}_\Phi(M))$ are weakly Laskerian. On the other
hand, according to Lemma \ref{FD0}, $N$ is $I$-$ETH$-weakly cofinite. Now, the exact
sequence

$$0\longrightarrow N\longrightarrow \lc^{t}_\Phi(M) \longrightarrow \lc^{t}_\Phi(M)/N
\longrightarrow 0$$

induces the following exact sequence,
$${\rm
	Hom}_R(R/I,\lc^{t}_\Phi(M))\longrightarrow {\rm
	Hom}_R(R/I,\lc^{t}_\Phi(M)/N)\longrightarrow {\rm Ext}^1_R(R/I,N)\longrightarrow$$
$$ {\rm Ext}^1_R(R/I,\lc^{t}_\Phi(M))\longrightarrow {\rm
	Ext}^1_R(R/I,\lc^{t}_\Phi(M)/N)\longrightarrow {\rm Ext}^2_R(R/I,N).$$

Consequently  $${\rm Hom}_R(R/I,\lc^{t}_\Phi(M)/N)\,\,\, {\rm and}\,\,\,{\rm
	Ext}^1_R(R/I,\lc^{t}_\Phi(M)/N)$$ are weakly Laskerian, as required.\qed\\

\begin{cor}
	\label{homext1} Let $R$ be a Noetherian ring and $I$ an ideal of $R$. Let $t\in\Bbb{N}_0$ be an integer and $M$ an $R$-module such that $\Ext^i_R(R/I,M)$ are weakly Laskerian for all $i\leq t+1$. Let the $R$-modules $H^i_I(M)$ are ${\rm FD_{\leq1}}$ ~ $R$-modules for all $i<t$.
	Then, the following conditions hold:
	
	{\rm(i)} The $R$-modules $H^i_I(M)$ are $I$-weakly cofinite for all
	$i<t$.
	
	{\rm(ii)} For all ${\rm FD_{\leq 0}}$ {\rm{(}}or minimax{\rm{)}} submodule $N$ of
	$H^{t}_I(M)$, the $R$-modules $${\rm Hom}_R(R/I,H^{t}_I(M)/N)\,\,\, {\rm
		and}\,\,\,{\rm Ext}^1_R(R/I,H^{t}_I(M)/N)$$ are weakly Laskerian. In particular the
	set $\Ass_R(H^{t}_I(M)/N)$ is  finite.
\end{cor}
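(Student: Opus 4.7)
The plan is to derive this corollary directly from Theorem \ref{homext} by specializing the system of ideals to the familiar cofinal system $\Phi=\{I^n:n\in\mathbb{N}_0\}$. With this choice we have $I\in\Phi$ and $\lc^i_\Phi(M)\cong \lc^i_I(M)$ for every $i\geq 0$, so the hypotheses of Theorem \ref{homext} are in bijection with the hypotheses of the corollary. Thus the whole argument is to apply Theorem \ref{homext} and then translate the conclusions from the language of $I$-$ETH$-weakly cofiniteness into the language of $I$-weakly cofiniteness and associated primes.

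For part (i), Theorem \ref{homext}(i) gives that $\lc^i_I(M)$ is $I$-$ETH$-weakly cofinite for all $i<t$. Since $\lc^i_I(M)$ is an $I$-torsion module, the remark following Definition \ref{def} asserts that for $I$-torsion modules the notions of $I$-$ETH$-weakly cofinite and $I$-weakly cofinite coincide. This upgrade is the only nontrivial translation step and is the closest thing to an obstacle — it requires no more than recalling that torsion forces $\Supp_R(\lc^i_I(M))\subseteq \V(I)$, which is exactly what the definition of $I$-weakly cofinite needs in addition to the $\Ext$-finiteness condition already supplied by $I$-$ETH$-weakly cofiniteness.

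For part (ii), the weak Laskerianness of $\Hom_R(R/I,\lc^t_I(M)/N)$ and $\Ext^1_R(R/I,\lc^t_I(M)/N)$ is immediate from Theorem \ref{homext}(ii) under the same specialization. For the "in particular" statement, observe that $\lc^t_I(M)/N$ is $I$-torsion, so standard adjunction gives the well-known identification
\[
\Ass_R\bigl(\lc^t_I(M)/N\bigr)=\Ass_R\bigl(\Hom_R(R/I,\lc^t_I(M)/N)\bigr).
\]
Since any weakly Laskerian module has a finite set of associated primes (apply the definition with the zero submodule), and since we have just shown the right-hand module is weakly Laskerian, the finiteness of $\Ass_R(\lc^t_I(M)/N)$ follows at once. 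This completes the corollary.
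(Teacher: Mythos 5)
Your proposal is correct and is the natural specialization the paper intends: take $\Phi=\{I^n: n\in\mathbb{N}_0\}$, use $\lc^i_\Phi\cong\lc^i_I$, invoke Theorem \ref{homext}, and translate via Remark 2.2(ii) (for $I$-torsion modules, $I$-$ETH$-weakly cofinite coincides with $I$-weakly cofinite). Your handling of the ``in particular'' clause is also right, and in fact fills a small gap the paper leaves implicit: Theorem \ref{homext}(ii) only asserts finiteness of $\Ass_R$ of the two $\Ext$ modules, whereas the corollary claims $\Ass_R(H^t_I(M)/N)$ itself is finite; your use of the standard identity $\Ass_R(T)=\Ass_R(\Hom_R(R/I,T))$ for $I$-torsion $T$ together with the fact that weakly Laskerian modules have finite sets of associated primes closes that gap cleanly.
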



The following corollaries answer to Hartshorne's question (i.e., Question 1.2) in terms of weakly cofiniteness.

\begin{cor}
	\label{cof} Let $R$ be a Noetherian ring and $I\in \Phi$ an ideal of $R$. Let $M$
	be an $I$-$ETH$-weakly cofinite $R$-module  such that the $R$-modules $\lc^{i}_\Phi(M)$
	are ${\rm FD_{\leq 1}}$ $R$-modules for all
	$i$. Then,
	\begin{itemize}
		\item[(i)] the $R$-modules $\lc^{i}_\Phi(M)$ are $I$-$ETH$-weakly cofinite {\rm{(}}in particular, $\Phi$-weakly cofinite{\rm{)}} for all $i$.
		\item[(ii)]  for any $i\geq 0$ and for any ${\rm FD_{\leq 0}}$ {\rm{(}}or minimax{\rm{)}} submodule $N$ of
		$\lc^{i}_\Phi(M)$, the $R$-module $\lc^{i}_\Phi(M)/N$ is   $I$-$ETH$-weakly cofinite
		{\rm{(}}in particular, $\Phi$-weakly cofinite{\rm{)}}.
	\end{itemize}
\end{cor}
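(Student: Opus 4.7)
The plan is to obtain Corollary \ref{cof} as a direct corollary of Theorem \ref{homext}, with part (ii) requiring one additional step that combines Theorem \ref{FD0} and Lemma \ref{third}.

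For part (i), I would first observe that the hypothesis ``$M$ is $I$-$ETH$-weakly cofinite'' is exactly the statement that $\Ext^i_R(R/I,M)$ is weakly Laskerian for every $i\geq 0$, while the ${\rm FD_{\leq 1}}$ hypothesis on $\lc^i_\Phi(M)$ holds for every $i$. Thus the hypotheses of Theorem \ref{homext} are satisfied for each $t\in\Bbb{N}_0$ simultaneously, and applying part (i) of that theorem with $t$ arbitrary yields $I$-$ETH$-weak cofiniteness of every $\lc^i_\Phi(M)$, which in particular gives $\Phi$-weak cofiniteness.

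For part (ii), fix $i\geq 0$ and a submodule $N\subseteq \lc^i_\Phi(M)$ that is ${\rm FD_{\leq 0}}$ or minimax. The idea is to show first that $N$ itself is $I$-$ETH$-weakly cofinite, after which Lemma \ref{third} applied to the short exact sequence
\[
0 \longrightarrow N \longrightarrow \lc^i_\Phi(M) \longrightarrow \lc^i_\Phi(M)/N \longrightarrow 0
\]
finishes the job. To establish $I$-$ETH$-weak cofiniteness of $N$, I would invoke Theorem \ref{FD0}: it suffices to verify that $\Hom_R(R/I,N)$ is weakly Laskerian. This follows because $\Hom_R(R/I,N)$ embeds in $\Hom_R(R/I,\lc^i_\Phi(M))$, and the latter is weakly Laskerian by part (i); since the class of weakly Laskerian modules is closed under submodules, $\Hom_R(R/I,N)$ inherits the property.

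There is no substantial obstacle here; the proof is essentially formal once Theorem \ref{homext}, Theorem \ref{FD0}, and Lemma \ref{third} are available. The only mild point of care is in part (ii), where one must remember that submodules of weakly Laskerian modules are weakly Laskerian in order to transfer the Hom-finiteness of $\lc^i_\Phi(M)$ down to $N$ and thereby apply Theorem \ref{FD0}.
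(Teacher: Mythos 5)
Your proof is correct and follows essentially the same route as the paper: part (i) is a direct application of Theorem \ref{homext}(i) for arbitrary $t$ (the paper labels it ``Clear''), and part (ii) uses Theorem \ref{FD0} to get $N$ $I$-$ETH$-weakly cofinite from the weak Laskerianness of $\Hom_R(R/I,N)$, then Lemma \ref{third} on the short exact sequence $0\to N\to\lc^i_\Phi(M)\to\lc^i_\Phi(M)/N\to 0$. The only difference is that you spell out the (correct) submodule argument for why $\Hom_R(R/I,N)$ is weakly Laskerian, which the paper leaves implicit.
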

\proof (i)  Clear.\\ (ii)  In view of  (i) the $R$-module $\lc^{i}_\Phi(M)$ is
$I$-$ETH$-weakly cofinite for all $i$. Hence the $R$-module ${\rm Hom}_R(R/I,N)$ is
weakly Laskerian, and so it follows from Lemma \ref{FD0} that $N$ is
$I$-$ETH$-weakly cofinite. Now, the exact sequence

$$0\longrightarrow N\longrightarrow \lc^{i}_\Phi(M)\longrightarrow \lc^{i}_\Phi(M)/N
\longrightarrow 0,$$

and Lemma \ref{third} implies that the $R$-module $\lc^{i}_\Phi(M)/N$ is $I$-$ETH$-weakly cofinite.\qed\\


\begin{cor}
	\label{cof1} Let $R$ be a Noetherian ring and $I$ an ideal of $R$. Let $M$
	be an $I$-$ETH$-weakly cofinite $R$-module  such that $R$-modules $H^i_I(M)$ are ${\rm
		FD_{\leq1}}$  $R$-modules for all $i$. Then,
	
	{\rm(i)} The $R$-modules $H^i_I(M)$ are $I$-weakly cofinite for all $i$.
	
	{\rm(ii)} For any $i\geq 0$ and for any ${\rm FD_{\leq 0}}$ {\rm{(}}or minimax{\rm{)}} submodule $N$ of
	$H^{i}_I(M)$, the $R$-module $H^{i}_I(M)/N$ is  $I$-weakly cofinite.
\end{cor}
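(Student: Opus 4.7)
The plan is to deduce Corollary \ref{cof1} as a direct specialization of Corollary \ref{cof}. Take $\Phi=\{I^n:n\in\mathbb{N}_0\}$. This is a system of ideals of $R$, because $I^m\cdot I^n\supseteq I^{m+n}$, and obviously $I\in\Phi$. For this system the functor $\lc_\Phi^i(-)$ coincides with the ordinary local cohomology functor $\lc_I^i(-)$, as noted in the introduction. Thus the hypothesis that each $\lc_I^i(M)$ is ${\rm FD_{\leq 1}}$ is exactly the ${\rm FD_{\leq 1}}$ hypothesis on $\lc_\Phi^i(M)$ required by Corollary \ref{cof}, and the hypothesis that $M$ is $I$-$ETH$-weakly cofinite is transferred verbatim.

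For part (i), I would apply Corollary \ref{cof}(i) to obtain that each $\lc_I^i(M)=\lc_\Phi^i(M)$ is $I$-$ETH$-weakly cofinite. Since every local cohomology module $\lc_I^i(M)$ is $I$-torsion, the second item of the remark following Definition \ref{def} upgrades $I$-$ETH$-weakly cofiniteness to $I$-weakly cofiniteness, giving (i).

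For part (ii), the quotient $\lc_I^i(M)/N$ by an ${\rm FD_{\leq 0}}$ (or minimax) submodule $N$ remains $I$-torsion, because $\lc_I^i(M)$ itself is $I$-torsion. Corollary \ref{cof}(ii) gives that $\lc_I^i(M)/N$ is $I$-$ETH$-weakly cofinite, and the same remark after Definition \ref{def} promotes this to $I$-weakly cofiniteness, which is what was to be proved.

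There is essentially no obstacle here; the entire content is the observation that ordinary local cohomology is a special case of general local cohomology and that $I$-torsion modules satisfy $I$-$ETH$-weakly cofinite $\Leftrightarrow$ $I$-weakly cofinite. The only minor point worth mentioning explicitly is the verification of $I$-torsion for $\lc_I^i(M)$ and its quotient $\lc_I^i(M)/N$, which is immediate from the definition of $\lc_I^i$.
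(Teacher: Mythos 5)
Your proposal is correct and takes essentially the same route the paper intends: Corollary \ref{cof1} is stated without proof precisely because it is the specialization of Corollary \ref{cof} to $\Phi=\{I^n:n\in\mathbb{N}_0\}$, with the passage from $I$-$ETH$-weakly cofinite to $I$-weakly cofinite being immediate from the fact that $H^i_I(M)$ and its quotients are $I$-torsion, via the remark after Definition \ref{def}. You have simply made the implicit steps explicit.
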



\begin{cor}
	Let $R$ be a Noetherian ring and $I$ an ideal of $R$. Let $M$ be
	an $R$-module such that  the $R$-modules $H^i_I(M)$ are ${\rm
		FD_{\leq1}}$ $R$-modules for all $i$. Then, the following conditions are equivalent:
	
	{\rm(i)} The $R$-module $M$
	is an $I$-$ETH$-weakly cofinite.
	
	{\rm(ii)} The $R$-modules $H^i_I(M)$ are $I$-weakly cofinite for all $i$.	
\end{cor}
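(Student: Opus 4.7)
The plan is to prove the two implications separately. The implication (i) $\Longrightarrow$ (ii) is essentially Corollary \ref{cof1}(i): its hypotheses are precisely that $M$ is $I$-$ETH$-weakly cofinite and each $\lc^{i}_I(M)$ is ${\rm FD_{\leq 1}}$, and its conclusion is that every $\lc^{i}_I(M)$ is $I$-weakly cofinite. So this direction will need no work beyond citing that result.

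For the converse (ii) $\Longrightarrow$ (i) the ${\rm FD_{\leq 1}}$ hypothesis will turn out to be unnecessary. My primary approach is to invoke the Grothendieck spectral sequence coming from the natural isomorphism $\Hom_R(R/I,-)\cong \Hom_R(R/I,\G_I(-))$ (valid because $R/I$ is $I$-torsion):
$$E_2^{p,q}=\Ext^{p}_R(R/I,\lc^{q}_I(M)) \Longrightarrow \Ext^{p+q}_R(R/I,M).$$
By hypothesis each $\lc^{q}_I(M)$ is $I$-weakly cofinite, so every term $E_2^{p,q}$ is weakly Laskerian. Since the class of weakly Laskerian $R$-modules is a Serre subcategory and only finitely many entries on any diagonal $p+q=n$ are non-zero, the finite filtration on $\Ext^{n}_R(R/I,M)$ produced by the spectral sequence has weakly Laskerian successive subquotients. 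I will then conclude that $\Ext^{n}_R(R/I,M)$ itself is weakly Laskerian for every $n\geq 0$, so that $M$ is $I$-$ETH$-weakly cofinite.

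As an alternative more in the spirit of the techniques developed in this section, I could instead prove by induction on $j$ the following statement: for every $R$-module $M'$ whose local cohomology modules $\lc^{i}_I(M')$ are all $I$-weakly cofinite, $\Ext^{j}_R(R/I,M')$ is weakly Laskerian. The base case $j=0$ is handled by $\Hom_R(R/I,M')\cong\Hom_R(R/I,\G_I(M'))$; for the inductive step I would replace $M'$ by $M'/\G_I(M')$ (which has the same higher local cohomology), embed it into an injective module $E$ with $\G_I(E)=0$, and apply the inductive hypothesis to $L=E/(M'/\G_I(M'))$, whose local cohomology modules are shifted copies of those of $M'$ and therefore remain $I$-weakly cofinite.

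The main difficulty in either approach is bookkeeping rather than conceptual: in the spectral-sequence route one must ensure that the Serre property of weakly Laskerian modules transfers correctly through the finite filtration on the abutment, while in the inductive route one must verify that the class of modules to which the induction applies is closed under the dimension-shift to $L$. Neither step uses the ${\rm FD_{\leq 1}}$ condition, which confirms that this hypothesis plays a role only in the forward direction.
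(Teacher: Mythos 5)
Your proof is correct and takes essentially the same approach as the paper: the implication (i)$\Rightarrow$(ii) is a direct citation of the preceding corollary as you propose, and for (ii)$\Rightarrow$(i) the paper simply cites Melkersson's Proposition 3.9, which is precisely the spectral-sequence (equivalently dimension-shifting) argument you spell out, resting on the Serre-subcategory property of weakly Laskerian modules. Your observation that the ${\rm FD_{\leq 1}}$ hypothesis is not needed for the converse is also correct.
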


\proof (i)$\Rightarrow$(ii) Follows by  Corollary \ref{cof1}.

(ii)$\Rightarrow$(i) It follows by \cite[Proposition 3.9]{Mel}.\qed\\


The following corollary is a generalization of \cite[Corollary 2.7]{BN} in terms of weakly cofiniteness.

\begin{cor}
	\label{cof2} Let $R$ be a Noetherian ring and $I\in \Phi$ an ideal of $R$. Let
	$M$ be an $I$-$ETH$-weakly cofinite $R$-module  such that  $\dim M/IM \leq 1$ {\rm(}e.g.,
	$\dim R/I \leq 1${\rm)} for all $I \in\Phi$. Then,
	\begin{itemize}
		\item[(i)] the $R$-modules $\lc^{i}_\Phi(M)$ are $I$-$ETH$-weakly cofinite {\rm{(}}in particular, $\Phi$-weakly cofinite{\rm{)}} for all $i$.
		\item[(ii)]  for any $i\geq 0$ and for any ${\rm FD_{\leq 0}}$ {\rm{(}}or minimax{\rm{)}} submodule $N$ of
		$\lc^{i}_\Phi(M)$, the $R$-module $\lc^{i}_\Phi(M)/N$ is   $I$-$ETH$-weakly cofinite
		{\rm{(}}in particular, $\Phi$-weakly cofinite{\rm{)}}.
	\end{itemize}
\end{cor}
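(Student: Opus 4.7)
The plan is to deduce this from Corollary \ref{cof}, whose conclusions are exactly (i) and (ii) in the statement. Since $M$ is already assumed to be $I$-$ETH$-weakly cofinite, the only hypothesis of Corollary \ref{cof} remaining to verify is that each $\lc^{i}_\Phi(M)$ is an ${\rm FD_{\leq 1}}$ $R$-module. In fact, I will show the stronger statement that $\dim \lc^{i}_\Phi(M)\leq 1$ for every $i\geq 0$, and then ${\rm FD_{\leq 1}}$ is automatic (with the zero submodule witnessing the bound).

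First I would fix $J\in\Phi$ and analyze the ordinary local cohomology $\lc^i_J(M)$. For a prime $\fp\in\Spec R$, one has $\lc^i_J(M)_\fp\cong \lc^i_{JR_\fp}(M_\fp)$, which is zero unless $\fp\supseteq J$ and $\fp\in\Supp_R(M)$. Hence
\[
\Supp_R(\lc^i_J(M))\subseteq V(J)\cap\Supp_R(M)=\Supp_R(M/JM),
\]
so $\dim \lc^i_J(M)\leq \dim M/JM\leq 1$ by the standing hypothesis. Next, using the natural isomorphism $\lc^i_\Phi(M)\cong\varinjlim_{J\in\Phi}\lc^i_J(M)$ and the fact that localisation commutes with direct limits, I obtain
\[
\Supp_R\bigl(\lc^i_\Phi(M)\bigr)\subseteq\bigcup_{J\in\Phi}\Supp_R(\lc^i_J(M))\subseteq\bigcup_{J\in\Phi}\Supp_R(M/JM).
\]
Every prime in this union has coheight at most one, so $\dim \lc^i_\Phi(M)\leq 1$; in particular, $\lc^i_\Phi(M)$ is ${\rm FD_{\leq 1}}$.

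With this in hand, both (i) and (ii) follow immediately by applying Corollary \ref{cof} to $M$. There is no real obstacle here: the only point requiring care is the passage from the individual $\lc^i_J(M)$ to the direct limit $\lc^i_\Phi(M)$, which is handled by the support-of-direct-limits observation above, and the trivial remark that any module of dimension at most one is ${\rm FD_{\leq 1}}$.
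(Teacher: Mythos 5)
Your proof follows the same route as the paper's: bound $\dim\lc^i_J(M)$ for each $J\in\Phi$, use $\lc^i_\Phi(M)\cong\varinjlim_{J\in\Phi}\lc^i_J(M)$ to bound $\dim\lc^i_\Phi(M)$, conclude ${\rm FD_{\leq 1}}$, and then invoke Corollary \ref{cof}. The one place where you are more explicit than the paper---and where genuine care is needed---is the chain $\Supp_R(\lc^i_J(M))\subseteq V(J)\cap\Supp_R(M)=\Supp_R(M/JM)$. The displayed equality is the Nakayama identity, which is valid when $M$ is finitely generated; for a module that is merely $I$-$ETH$-weakly cofinite only the inclusion $\Supp_R(M/JM)\subseteq V(J)\cap\Supp_R(M)$ is automatic, and equality can fail. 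For instance, take $R=k[[x,y,z]]$, $J=(x)$ and $M=R_x/R=\lc^1_J(R)$: then $M$ is $J$-cofinite (hence $J$-$ETH$-weakly cofinite), $JM=M$ so $M/JM=0$, yet $\lc^0_J(M)=M$ has two-dimensional support $V(J)$ and is not ${\rm FD_{\leq 1}}$. So the hypothesis $\dim M/IM\leq 1$ by itself does not force $\lc^i_\Phi(M)$ to be ${\rm FD_{\leq 1}}$ for an arbitrary $I$-$ETH$-weakly cofinite $M$, and the reduction to Corollary \ref{cof} breaks down. (This is a gap present in the paper's own proof as well, which simply asserts $\dimSupp\lc^i_I(M)\leq 1$ without justification.) Under the stronger parenthetical hypothesis $\dim R/I\leq 1$ for all $I\in\Phi$ the argument is fine, since then $\Supp_R(\lc^i_J(M))\subseteq V(J)$ gives the bound directly with no Nakayama step; alternatively one could add the assumption that $M$ is finitely generated. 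You should make one of these two repairs explicit.
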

\begin{proof}
	(i) Since  by \cite[Lemma 2.1]{BZ1}, $$\lc^{i}_\Phi(M)\cong\underset { I \in \Phi}
	\varinjlim \lc^{i}_I(M),$$ it is easy to see that $\Supp_R(\lc^{i}_\Phi(M))
	\subseteq\underset{I \in \Phi}\bigcup \Supp_R(\lc^{i}_I(M))$ and therefore
	$$\dimSupp \lc^{i}_\Phi(M) \leq \sup \{ \dimSupp \lc^{i}_{I}(M)| I \in
	\Phi \} \leq 1,$$ thus $\lc^{i}_\Phi(M)$ is ${\rm FD_{\leq 1}}$ $R$-module
	and the assertion follows by  Corollary \ref{cof}
	(i).\\
	(ii) Proof is the same as \ref{cof} (ii).
\end{proof}


\begin{cor}
	Let $R$ be a Noetherian ring and $I$ an ideal of $R$. Let $M$ be
	an $R$-module such that  $\dim M/IM \leq 1$ {\rm(}e.g.,
	$\dim R/I \leq 1${\rm)}. Then, the following conditions are equivalent:
	
	{\rm(i)} The $R$-modules $M$
	is an $I$-$ETH$-weakly cofinite.
	
	{\rm(ii)} The $R$-modules $H^i_I(M)$ are $I$-weakly cofinite for all $i$.	
\end{cor}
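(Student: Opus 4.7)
The plan is to reduce this corollary to the immediately preceding one, whose hypothesis is that $\lc^i_I(M)$ is $\mathrm{FD}_{\leq 1}$ for every $i$. Concretely, I need to show that the assumption $\dim M/IM\leq 1$ forces each $\lc^i_I(M)$ to be an $\mathrm{FD}_{\leq 1}$ $R$-module, and then the equivalence (i)$\Leftrightarrow$(ii) is immediate from the preceding corollary.

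First I would verify the dimension bound $\dim\lc^i_I(M)\leq 1$ for every $i\geq 0$. Since $\lc^i_I(M)$ is $I$-torsion and local cohomology commutes with localization, one has the containment
$$\Supp_R(\lc^i_I(M))\;\subseteq\; V(I)\cap\Supp_R(M),$$
and hence $\dim\Supp_R(\lc^i_I(M))\leq\dim(M/IM)\leq 1$, by the very same computation used in the proof of Corollary~\ref{cof2}. Taking $N=0$, this says that $\lc^i_I(M)$ is an $\mathrm{FD}_{\leq 1}$ $R$-module for every $i\geq 0$.

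Having established the $\mathrm{FD}_{\leq 1}$ condition, I would finish exactly as in the preceding corollary. For (i)$\Rightarrow$(ii), the module $M$ is $I$-$ETH$-weakly cofinite and each $\lc^i_I(M)$ is $\mathrm{FD}_{\leq 1}$, so Corollary~\ref{cof1}(i) applies and gives that each $\lc^i_I(M)$ is $I$-weakly cofinite. For the converse (ii)$\Rightarrow$(i), I would simply quote \cite[Proposition~3.9]{Mel}, which asserts that if each $\lc^i_I(M)$ is $I$-weakly cofinite then $\Ext^i_R(R/I,M)$ is weakly Laskerian for all $i$, i.e.\ $M$ is $I$-$ETH$-weakly cofinite.

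There is essentially no obstacle here: the only small point to check is the dimension inequality $\dim\Supp_R(\lc^i_I(M))\leq\dim(M/IM)$, and this is precisely the observation the author has already used in the proof of Corollary~\ref{cof2}. Once this is in hand the result is a direct application of already-established machinery.
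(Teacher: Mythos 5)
Your proof is correct and follows essentially the same route as the paper: the paper's own proof simply cites Corollary~\ref{cof2} for (i)$\Rightarrow$(ii) and \cite[Proposition~3.9]{Mel} for (ii)$\Rightarrow$(i), and Corollary~\ref{cof2} is itself established by the very dimension computation you reproduce, followed by an application of Corollary~\ref{cof} (the $\Phi$-analogue of the Corollary~\ref{cof1} you invoke). The only difference is cosmetic: you unpack the $\mathrm{FD}_{\leq 1}$ step explicitly and route through Corollary~\ref{cof1}, whereas the paper cites Corollary~\ref{cof2} directly.
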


\proof (i)$\Rightarrow$(ii) Follows by  Corollary \ref{cof2}.

(ii)$\Rightarrow$(i) It follows by \cite[Proposition 3.9]{Mel}.\qed\\


One of the main results of this section is to prove that for an arbitrary ideal $I$ of
a Noetherian ring $R$, the category
 of  $\mathscr {FD}^1(R,I)_{ethwcof}$ modules is Abelian category.

\begin{thm}
\label{abel1}
 Let $I$ be an ideal of a Noetherian ring $R$. Let   $\mathscr {FD}^1(R,I)_{ethwcof}$  denote the category of $I$-$ETH$-weakly cofinite and ${\rm FD_{\leq1}}$ ~ $R$-modules. Then
 $\mathscr {FD}^1(R,I)_{ethwcof}$
is an Abelian category.
\end{thm}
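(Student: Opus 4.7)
Since $\mathscr{FD}^1(R,I)_{ethwcof}$ is a full subcategory of the (Abelian) category of $R$-modules, it suffices to show that if $f:M\longrightarrow N$ is a morphism between two objects of $\mathscr{FD}^1(R,I)_{ethwcof}$, then $\Ker f$, $\Image f$ and $\Coker f$ all lie in $\mathscr{FD}^1(R,I)_{ethwcof}$. (Closure under finite direct sums is immediate from the definitions.) My plan is to dispose of the $\mathrm{FD}_{\leq 1}$ part first by a soft argument, and then use Theorem \ref{wl} and Lemma \ref{third} to handle the $I$-$ETH$-weakly cofiniteness.

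The first step is the $\mathrm{FD}_{\leq 1}$ part. Since $R$ is Noetherian, if $M$ is $\mathrm{FD}_{\leq 1}$ with finitely generated submodule $K\subseteq M$ such that $\dim M/K\leq 1$, and $L\subseteq M$, then $L\cap K$ is finitely generated (submodule of a finitely generated module over a Noetherian ring) and $L/(L\cap K)\hookrightarrow M/K$, so $L$ is $\mathrm{FD}_{\leq 1}$; the quotient case is similar. Consequently $\Ker f\subseteq M$, $\Image f=M/\Ker f$ and $\Coker f=N/\Image f$ are all $\mathrm{FD}_{\leq 1}$.

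The second, and main, step is to show $\Ker f$ is $I$-$ETH$-weakly cofinite. By Theorem \ref{wl} it suffices to prove that $\Hom_R(R/I,\Ker f)$ and $\Ext^1_R(R/I,\Ker f)$ are weakly Laskerian. To this end I first extract information on $\Image f$ from the short exact sequence
\[0\lo \Image f \lo N \lo \Coker f \lo 0,\]
whose associated long exact sequence gives $\Hom_R(R/I,\Image f)\hookrightarrow \Hom_R(R/I,N)$; since $N$ is $I$-$ETH$-weakly cofinite and weakly Laskerian modules form a Serre subcategory, $\Hom_R(R/I,\Image f)$ is weakly Laskerian. Now applying $\Hom_R(R/I,-)$ to
\[0\lo \Ker f\lo M\lo \Image f\lo 0\]
yields $\Hom_R(R/I,\Ker f)\hookrightarrow \Hom_R(R/I,M)$, so $\Hom_R(R/I,\Ker f)$ is weakly Laskerian, and the five-term piece
\[\Hom_R(R/I,M)\lo \Hom_R(R/I,\Image f)\lo \Ext^1_R(R/I,\Ker f)\lo \Ext^1_R(R/I,M)\]
sandwiches $\Ext^1_R(R/I,\Ker f)$ between an extension of a quotient of $\Hom_R(R/I,\Image f)$ and a submodule of $\Ext^1_R(R/I,M)$; both are weakly Laskerian, so by the Serre property $\Ext^1_R(R/I,\Ker f)$ is weakly Laskerian. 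Theorem \ref{wl} then gives that $\Ker f$ is $I$-$ETH$-weakly cofinite.

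The final step uses Lemma \ref{third} twice. Applying it to the short exact sequence $0\lo \Ker f\lo M\lo \Image f\lo 0$, where two terms are $I$-$ETH$-weakly cofinite, we conclude that $\Image f$ is $I$-$ETH$-weakly cofinite; applying it once more to $0\lo \Image f\lo N\lo \Coker f\lo 0$ gives that $\Coker f$ is $I$-$ETH$-weakly cofinite. Combined with the $\mathrm{FD}_{\leq 1}$ statement, this places $\Ker f$, $\Image f$ and $\Coker f$ in $\mathscr{FD}^1(R,I)_{ethwcof}$, proving the theorem. The real point of the argument, and in my view its only subtlety, is the interlocking of the two long exact sequences in step three: one must peel off $\Hom_R(R/I,\Image f)$ from the sequence involving $N$ before one can extract the $\Ext^1$-information about $\Ker f$ from the sequence involving $M$; once Theorem \ref{wl} promotes $\Ker f$ to full $I$-$ETH$-weakly cofiniteness, Lemma \ref{third} handles $\Image f$ and $\Coker f$ routinely.
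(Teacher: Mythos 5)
Your proof is correct and follows essentially the same route as the paper's: reduce to showing $\Ker f$ and $\Coker f$ lie in the category, use the short exact sequences $0\to\Ker f\to M\to\Image f\to 0$ and $0\to\Image f\to N\to\Coker f\to 0$ together with Theorem \ref{wl} to handle $\Ker f$, and then Lemma \ref{third} to handle $\Image f$ and $\Coker f$. The only differences are cosmetic: you supply a direct verification that $\mathrm{FD}_{\leq1}$ is closed under submodules and quotients (the paper cites a Serre-subcategory lemma), and you make explicit the observation that $\Hom_R(R/I,\Image f)$ embeds in $\Hom_R(R/I,N)$ before invoking the five-term piece, a step the paper leaves implicit.
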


\begin{proof}
 Let $M,N\in \mathscr {FD}^1(R,I)_{ethwcof}$ and let $f:M\longrightarrow N$ be an
$R$-homomorphism. Since by \cite[Lemma 2.3 (v)]{AB} the class of ${\rm FD_{\leq 1}}$ is a Serre subcategory of the category of $R$-modules, it is enough  to show that the $R$-modules $\Ker f$ and
$\Coker f$ are $I$-$ETH$-weakly cofinite.

To this end, the exact sequence
$$0\longrightarrow \Ker f \longrightarrow M \longrightarrow
\Image f \longrightarrow 0,$$ induces an  exact sequence
$$0\longrightarrow {\rm Hom}_R(R/I,\Ker f) \longrightarrow {\rm
Hom}_R(R/I,M) \longrightarrow {\rm Hom}_R(R/I,\Image f )$$$$\longrightarrow
{\rm Ext}^{1}_R(R/I,\Ker f) \longrightarrow \Ext^{1}_R(R/I,M),$$ that implies the
$R$-modules $\Hom_R(R/I,\Ker f)$ and $\Ext^1_R(R/I,\Ker f)$ are weakly Laskerian. Since $\Ker f$ is  ${\rm
	FD_{\leq1}}$~$R$-module, therefore it follows from Theorem \ref{wl} that $\Ker f$ is
 $I$-$ETH$-weakly cofinite. Now, the assertion follows from the following exact
sequences
$$0\longrightarrow \Ker f \longrightarrow M \longrightarrow
\Image f  \longrightarrow 0,$$and
$$0\longrightarrow  \Image f \longrightarrow N \longrightarrow
\Coker f \longrightarrow 0.$$
\end{proof}


The following corollaries are generalization of \cite[Corolaries  3.8 and 3.9]{AB}.

\begin{cor}
\label{torext}
 Let $R$ be a Noetherian ring and $I$ a proper ideal of $R$. Let  $M$ is a non-zero $I$-$ETH$-weakly cofinite  ${\rm FD_{\leq1}}$ ~ $R$-module. Then, the R-modules ${\rm Ext}^{i}_R(N,M)$ and ${\rm Tor}^R_{i}(N,M)$ are $I$-$ETH$-weakly cofinite  ${\rm
FD_{\leq1}}$~ $R$-modules, for all finitely generated $R$-modules $N$ and all integers $ i \geq 0$.
\end{cor}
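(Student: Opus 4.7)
The plan is to reduce the statement to the Abelian category result (Theorem \ref{abel1}) by computing $\Ext^i_R(N,M)$ and $\Tor^R_i(N,M)$ from a finite free resolution of $N$ and tracking that every module in sight remains in $\mathscr{FD}^1(R,I)_{ethwcof}$.

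First I would observe that the class $\mathscr{FD}^1(R,I)_{ethwcof}$ is closed under finite direct sums. Indeed, $\Ext^i_R(R/I, M^n) \cong \Ext^i_R(R/I,M)^n$, and the class of weakly Laskerian modules is closed under finite direct sums; closure of ${\rm FD}_{\leq 1}$ under finite direct sums is immediate from its Serre property (\cite[Lemma 2.3 (v)]{AB}). Hence $M^n \in \mathscr{FD}^1(R,I)_{ethwcof}$ for every $n \geq 0$.

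Next, since $N$ is finitely generated over the Noetherian ring $R$, choose a free resolution
\[
\cdots \lo R^{n_2} \lo R^{n_1} \lo R^{n_0} \lo N \lo 0
\]
with each $R^{n_j}$ finitely generated free. Applying $\Hom_R(-,M)$ yields the cochain complex whose $j$-th term is $\Hom_R(R^{n_j},M)\cong M^{n_j}$, and applying $-\otimes_R M$ yields the chain complex whose $j$-th term is $R^{n_j}\otimes_R M\cong M^{n_j}$. By the previous step, every term of both complexes lies in $\mathscr{FD}^1(R,I)_{ethwcof}$. Since $\Ext^i_R(N,M)$ and $\Tor^R_i(N,M)$ are subquotients (kernels modulo images) of these terms, and $\mathscr{FD}^1(R,I)_{ethwcof}$ is an Abelian subcategory of the category of $R$-modules by Theorem \ref{abel1}, both kernels and cokernels (hence images) of morphisms between objects of $\mathscr{FD}^1(R,I)_{ethwcof}$ remain in this category. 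Therefore $\Ext^i_R(N,M)$ and $\Tor^R_i(N,M)$ belong to $\mathscr{FD}^1(R,I)_{ethwcof}$ for all $i \geq 0$, which is exactly the claim.

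The main conceptual point (and the only place where something non-formal happens) is the use of Theorem \ref{abel1}; the rest is a standard homological bookkeeping argument using finite direct sums and subquotients. No real obstacle is anticipated, since the heavy lifting has already been done in establishing that $\mathscr{FD}^1(R,I)_{ethwcof}$ is Abelian.
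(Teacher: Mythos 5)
Your proof is correct and follows essentially the same route as the paper: take a finite free resolution of $N$, apply $\Hom_R(-,M)$ and $-\otimes_R M$, and invoke Theorem \ref{abel1} to conclude that the homology stays in $\mathscr{FD}^1(R,I)_{ethwcof}$. You merely spell out the two steps the paper leaves implicit — closure under finite direct sums and the subquotient argument — which is a harmless and helpful expansion.
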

\proof Since $N$ is finitely generated it follows that $N$ has a free resolution of
finitely generated free modules. Now the assertion follows using Theorem
\ref{abel1} and computing the modules ${\rm Tor}_i^R(N,M)$ and ${\rm
Ext}^i_R(N,M)$, by this
free resolution. \qed\\

\begin{cor}
\label{torextlc} Let $\Phi$ be a system of ideals of  a Noetherian ring $R$, $M$ a non-zero
$I$-$ETH$-weakly cofinite $R$-module such that $ \lc^{i}_{\Phi}(M)$ are ${\rm FD_{\leq 1}}$~ $R$-modules for all $ i \geq 0$. Then for each finite
$R$-module $N$, the $R$-modules ${\rm Ext}^{j}_R(N,\lc^{i}_{\Phi}(M))$ and ${\rm
Tor}^R_{j}(N,\lc^{i}_{\Phi}(M))$ are $\Phi$-weakly cofinite and ${\rm FD_{\leq 1}}$~ $R$-modules for all $ i \geq 0$ and $ j \geq 0$.
\end{cor}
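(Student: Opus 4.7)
The plan is to obtain this as an essentially immediate consequence of the previously established Corollary \ref{cof}(i) together with Corollary \ref{torext}, once one verifies that each local cohomology module $\lc^{i}_{\Phi}(M)$ sits inside the abelian category $\mathscr{FD}^1(R,I)_{ethwcof}$ of Theorem \ref{abel1}. So the first step is to apply Corollary \ref{cof}(i): the hypotheses of the present statement are exactly those of that corollary, and we conclude that $\lc^{i}_{\Phi}(M)$ is $I$-$ETH$-weakly cofinite for every $i\ge 0$. Combined with the standing assumption that each $\lc^{i}_{\Phi}(M)$ is ${\rm FD}_{\leq 1}$, this places every $\lc^{i}_{\Phi}(M)$ inside $\mathscr{FD}^1(R,I)_{ethwcof}$.

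Next, fix a finitely generated $R$-module $N$ and a nonnegative integer $i$. I would then directly invoke Corollary \ref{torext} with the module $\lc^{i}_{\Phi}(M)$ in the role of $M$: this yields that ${\rm Ext}^{j}_R(N,\lc^{i}_{\Phi}(M))$ and ${\rm Tor}^R_{j}(N,\lc^{i}_{\Phi}(M))$ are both $I$-$ETH$-weakly cofinite and ${\rm FD}_{\leq 1}$ for every $j\ge 0$. The underlying mechanism is that a free resolution of $N$ by finitely generated free modules produces a complex whose terms are finite direct sums of copies of $\lc^{i}_{\Phi}(M)$; since $\mathscr{FD}^1(R,I)_{ethwcof}$ is closed under finite direct sums and is abelian by Theorem \ref{abel1}, the kernels and cokernels computing Ext and Tor remain in this category.

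Finally, to pass from $I$-$ETH$-weakly cofinite to $\Phi$-weakly cofinite, one simply observes that the hypothesis $I\in\Phi$ is built in, and that by Definition \ref{def} the $I$-$ETH$-weak cofiniteness of a module $X$ means precisely that $\Ext^{k}_R(R/I,X)$ is weakly Laskerian for all $k\ge 0$. This is in turn exactly what is needed for $X$ to be $\Phi$-weakly cofinite, using the same witness ideal $I\in\Phi$. Applying this to $X={\rm Ext}^{j}_R(N,\lc^{i}_{\Phi}(M))$ and $X={\rm Tor}^R_{j}(N,\lc^{i}_{\Phi}(M))$ completes the argument.

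In short, there is no real obstacle here; the corollary is a formal combination of three already established results (Corollary \ref{cof}(i), Theorem \ref{abel1} and Corollary \ref{torext}) and the trivial implication ``$I$-$ETH$-weakly cofinite with $I\in\Phi\Rightarrow\Phi$-weakly cofinite''. The only thing worth double-checking is that the ${\rm FD}_{\leq 1}$ conclusion is preserved under the formation of Ext and Tor, which is exactly the content borrowed from Corollary \ref{torext} and relies in turn on \cite[Lemma 2.3 (v)]{AB} asserting that ${\rm FD}_{\leq 1}$ is a Serre subcategory.
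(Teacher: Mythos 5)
Your proposal is correct and takes essentially the same route as the paper, whose proof is simply the one-line instruction to apply Corollaries~\ref{torext} and \ref{cof}; you have just filled in the bookkeeping (applying Corollary~\ref{cof}(i) to place each $\lc^i_\Phi(M)$ in $\mathscr{FD}^1(R,I)_{ethwcof}$, then invoking Corollary~\ref{torext}, then noting that $I$-$ETH$-weak cofiniteness with $I\in\Phi$ gives $\Phi$-weak cofiniteness). No gaps.
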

\proof Apply Corollaries  \ref{torext} and \ref{cof}.\qed\\

\begin{cor}
	\label{torextlc1} Let $\Phi$ be a system of ideals of  a Noetherian ring $R$, $M$ a non-zero
	$I$-$ETH$-weakly cofinite $R$-module such that $\dim M/IM \leq 1$ {\rm(}e.g.,
	$\dim R/I \leq 1${\rm)} for all $I \in\Phi$. Then for each finite
	$R$-module $N$, the $R$-modules ${\rm Ext}^{j}_R(N,\lc^{i}_{\Phi}(M))$ and ${\rm
		Tor}^R_{j}(N,\lc^{i}_{\Phi}(M))$ are $\Phi$-weakly cofinite and ${\rm FD_{\leq 1}}$~ $R$-modules for all $ i \geq 0$ and $ j \geq 0$.
\end{cor}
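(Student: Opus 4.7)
The plan is to reduce Corollary~\ref{torextlc1} to the already-proved Corollary~\ref{torextlc}, whose hypotheses require $\lc^i_\Phi(M)$ to be ${\rm FD_{\leq 1}}$ for every $i$. Thus the only real work is to promote the dimension bound on $M/IM$ to a dimension bound on the supports of the general local cohomology modules, after which the stated conclusion drops out verbatim from the preceding corollary.

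First I would transcribe the dimension bookkeeping carried out in the proof of Corollary~\ref{cof2}(i). Using the natural equivalence
$$\lc^i_\Phi(M)\cong \varinjlim_{I\in\Phi}\lc^i_I(M)$$
supplied by \cite[Lemma 2.1]{BZ1}, one obtains the support containment
$$\Supp_R(\lc^i_\Phi(M)) \subseteq \bigcup_{I\in\Phi}\Supp_R(\lc^i_I(M)).$$
Each $\lc^i_I(M)$ is $I$-torsion, hence supported inside $V(I)$, and standard arguments yield $\dimSupp \lc^i_I(M)\leq \dim M/IM\leq 1$ for every $I\in\Phi$. Taking the supremum over $\Phi$ gives $\dimSupp\lc^i_\Phi(M)\leq 1$, so $\lc^i_\Phi(M)$ is ${\rm FD_{\leq 1}}$ for every $i\geq 0$, exactly the hypothesis feeding into Corollary~\ref{torextlc}.

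At this point I would simply invoke Corollary~\ref{torextlc}: since $M$ is assumed $I$-$ETH$-weakly cofinite and each $\lc^i_\Phi(M)$ has now been shown to be ${\rm FD_{\leq 1}}$, the modules $\Ext^j_R(N,\lc^i_\Phi(M))$ and $\Tor^R_j(N,\lc^i_\Phi(M))$ are $\Phi$-weakly cofinite and ${\rm FD_{\leq 1}}$ for every finitely generated $N$ and all $i,j\geq 0$, as required. No serious obstacle is anticipated; the statement is essentially a specialization obtained by absorbing the dimension hypothesis into the ${\rm FD_{\leq 1}}$ assumption of the preceding corollary, paralleling the way Corollary~\ref{cof2} specializes Corollary~\ref{cof}.
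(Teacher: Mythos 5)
Your argument is correct and follows the paper's own proof essentially verbatim: reduce to Corollary~\ref{torextlc} by showing, via the direct-limit description $\lc^i_\Phi(M)\cong \varinjlim_{I\in\Phi}\lc^i_I(M)$ and the support bound $\Supp_R(\lc^i_I(M))\subseteq \Supp_R(M)\cap V(I)=\Supp_R(M/IM)$, that each $\lc^i_\Phi(M)$ is ${\rm FD_{\leq 1}}$. This is exactly the paper's reduction, which likewise cites the dimension bookkeeping from the proof of Corollary~\ref{cof2}(i) and then applies Corollary~\ref{torextlc}.
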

\proof By proof of Corollary \ref{cof2} (i) $\Supp_R(\lc^{i}_\Phi(M))
\subseteq\underset{I \in \Phi}\bigcup \Supp_R(\lc^{i}_I(M))$ and therefore
$$\dimSupp \lc^{i}_\Phi(M) \leq \sup \{ \dimSupp \lc^{i}_{I}(M)| I \in
\Phi \} \leq 1,$$ thus $\lc^{i}_\Phi(M)$ is ${\rm FD_{\leq 1}}$ $R$-module
and the assertion follows by  Corollary \ref{torextlc}.\qed\\


\begin{cor}
	\label{torextlc2} Let $I$ be an ideal of  a Noetherian ring $R$, $M$ a non-zero
	$I$-$ETH$-weakly cofinite $R$-module such that $\dim M/IM \leq 1$ {\rm(}e.g.,
	$\dim R/I \leq 1${\rm)}. Then for each finite
	$R$-module $N$, the $R$-modules ${\rm Ext}^{j}_R(N,\lc^{i}_{I}(M))$ and ${\rm
		Tor}^R_{j}(N,\lc^{i}_{I}(M))$ are $I$-weakly cofinite and ${\rm FD_{\leq 1}}$~ $R$-modules for all $ i \geq 0$ and $ j \geq 0$.
\end{cor}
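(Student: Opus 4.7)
My plan is to deduce this corollary from Corollary~\ref{torextlc1} by specialising to the system of ideals $\Phi = \{I^n : n \in \mathbb{N}_0\}$. For this particular $\Phi$ the introduction records the natural isomorphism $\lc_\Phi^i(-) \cong \lc_I^i(-)$, so once the hypotheses of Corollary~\ref{torextlc1} are verified the conclusion should transfer, the only thing left being to upgrade ``$\Phi$-weakly cofinite'' to ``$I$-weakly cofinite''.

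First I would verify that $\dim M/JM \leq 1$ for every $J = I^n \in \Phi$. I intend to prove the stronger inclusion $\Supp_R(M/I^n M) \subseteq \Supp_R(M/IM)$ by localising at a prime $\fp$: if $\fp \not\supseteq I$ then $I_\fp = R_\fp$, forcing both $(IM)_\fp = M_\fp$ and $(I^n M)_\fp = M_\fp$; while if $\fp \supseteq I$ and $(M/IM)_\fp = 0$ then $M_\fp = I M_\fp$, and iterating $n$ times yields $M_\fp = I^n M_\fp$. This gives $\dim M/I^n M \leq \dim M/IM \leq 1$, so the hypothesis ``$\dim M/JM \leq 1$ for all $J \in \Phi$'' of Corollary~\ref{torextlc1} is met, while the hypothesis ``$M$ is $I$-$ETH$-weakly cofinite with $I \in \Phi$'' is part of the assumption.

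Invoking Corollary~\ref{torextlc1} would then yield, for every finitely generated $N$ and all $i,j \geq 0$, that $\Ext^j_R(N,\lc_I^i(M))$ and $\Tor^R_j(N,\lc_I^i(M))$ are $\Phi$-weakly cofinite and ${\rm FD_{\leq 1}}$. The ${\rm FD_{\leq 1}}$ conclusion is exactly what is claimed, so it only remains to convert $\Phi$-weakly cofiniteness into $I$-weakly cofiniteness. Since $I \in \Phi$, the definition of $\Phi$-weakly cofiniteness already supplies weak Laskerianness of $\Ext^k_R(R/I,-)$ applied to these modules; the missing support condition $\Supp \subseteq V(I)$ follows from the $I$-torsionness of $\lc_I^i(M)$, because computing $\Ext^j_R(N,\lc_I^i(M))$ and $\Tor^R_j(N,\lc_I^i(M))$ from a resolution of $N$ by finitely generated free modules realises them as subquotients of finite direct sums of copies of $\lc_I^i(M)$, and finite direct sums and subquotients of $I$-torsion modules are $I$-torsion.

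The only genuinely non-automatic step — and hence the point to be careful about — is the support inclusion $\Supp M/I^n M \subseteq \Supp M/IM$ for the possibly non-finitely generated module $M$, where one cannot invoke Nakayama. Once this localisation argument is in hand, everything else is a direct appeal to the Section~2 machinery already packaged in Corollary~\ref{torextlc1}.
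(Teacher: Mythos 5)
Your proof is correct and follows the route the paper leaves implicit: specialize Corollary~\ref{torextlc1} to $\Phi = \{I^n : n \in \mathbb{N}_0\}$, for which $\lc^i_\Phi(-) \cong \lc^i_I(-)$. You rightly supply two details the paper omits: verifying $\Supp_R(M/I^nM) \subseteq \Supp_R(M/IM)$ by localization (Nakayama being unavailable since $M$ need not be finitely generated), and recovering the support condition $\Supp \subseteq V(I)$ from the $I$-torsionness of $\lc^i_I(M)$, passed to subquotients of finite direct sums via a finite free resolution of $N$. One place to tighten the wording: the definition of $\Phi$-weakly cofinite only asserts the existence of \emph{some} $J \in \Phi$ making $\Ext^k_R(R/J,-)$ weakly Laskerian, so ``$I \in \Phi$'' does not by itself yield weak Laskerianness of $\Ext^k_R(R/I,-)$. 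The cleaner justification is to trace back through Corollaries~\ref{torextlc1}, \ref{torextlc}, \ref{torext} and \ref{cof}: those proofs in fact deliver that the $\Ext$ and $\Tor$ modules are $I$-$ETH$-weakly cofinite for the very ideal $I$ in the hypothesis, which combined with your support observation gives $I$-weakly cofinite.
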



\section{Weakly cofiniteness of local cohomology defined by a pair of ideals}

 As a special case of general local cohomology and generalization of ordinary  local cohomology modules, R. Takahashi, Y. Yoshino, and T.
 Yoshizawa \cite{TYY}, introduced local cohomology modules with respect to a
 pair of ideals. The $(I,J)$-torsion submodule $\Gamma_{I,J}(M)$ of $M$ is a
 submodule of $M$ consists of all elements $x$ of $M$ with Supp$(Rx)\subseteq
 W(I,J)$, in which
 $$W(I,J)=\{~\fp\in\textmd{Spec}(R)\mid I^n\subseteq
 \fp+J \textmd{\ \ for an integer} \ n\geq1\}.$$
 
 For an integer $i$, the $i$-th local cohomology functor $\lc^{i}_{I,J}$ with respect
 to $(I,J)$ is the $i$-th right derived functor of $\Gamma_{I,J}$. The $R$-module
 $\lc^{i}_{I,J}(M)$ is called the $i$-th local cohomology module of $M$ with
 respect to $(I,J)$. In the case $J=0$, $\lc^{i}_{I,J}(-)$ coincides with the ordinary
 local cohomology functor $\lc^{i}_{I}(-)$. Also, we are concerned with the
 following set of ideals of $R$:
 $$\tilde{W}(I,J)=\{~\fa\trianglelefteq R\mid
 I^n\subseteq \fa+J \textmd{\ \ for an integer} \ n\geq0\}.$$
 
The definition of  weakly cofinite module with respect to an ideal   {\rm{(}}\cite[Definition 2.4]{DiM2}{\rm{)}}
motivates the following definition.

\begin{defn}
	An $R$-module $M$ is called  $(I,J)$-weakly cofinite if
	$\Supp_R(M)\subseteq W(I,J)$ and $\Ext^i_R(R/I,M)$ is a weakly Laskerian $R$-module, for
	all $i\geq0$.
\end{defn} 
 

\begin{rem} Let $I$ and $J$ be two ideals of $R$. Replacing $\Phi$ by $\tilde{W}(I,J)$, $\lc^{i}_\Phi(M)$ by $\lc^{i}_{I,J}(M)$ and $\Phi$-weakly cofinite module by $(I,J)$-weakly cofinite module, the Theorem \ref{homext} and  Corollaries \ref{cof}, \ref{cof2}, \ref{torextlc} and \ref{torextlc1} are true for local cohomology modules defined by a
	pair of ideals. Because by  \cite[{\it Definition 3.1 and Theorem 3.2}\rm]{TYY}, it is easy
	to see that the local cohomology modules defined by a pair of ideals is a special
	case of local cohomology modules with respect to a system of ideals.
\end{rem}

%




\bibliographystyle{amsplain}

\end{document}